\def\tsc#1{\csdef{#1}{\textsc{\lowercase{#1}}\xspace}}
\newcommand{\udots}{\mathinner{\mskip1mu\raise1pt\vbox{\kern7pt\hbox{.}}  
		\mskip2mu\raise4pt\hbox{.}\mskip2mu\raise7pt\hbox{.}\mskip1mu}} 
\newtheorem{assumption}{Assumption}[section]
\newtheorem{theorem}{Theorem}
\newproof{proof}{Proof}
\newtheorem{remark}{Remark}
\begin{document}
\let\WriteBookmarks\relax
\def\floatpagepagefraction{1}
\def\textpagefraction{.001}

\title [mode = title]{Meta-MgNet: Meta Multigrid Networks for Solving Parameterized Partial Differential Equations}               \shorttitle{Meta Multigrid Networks for Solving Parameterized PDEs}

\author[1]{Yuyan Chen}
\address[1]{School of Mathematical Science, Peking University}
\ead{chenyuyan@pku.edu.cn}

\author[2]{Bin Dong}
\address[2]{Beijing International Center for Mathematical Research \&\\Institute for Artificial Intelligence, Peking University}
\ead{dongbin@math.pku.edu.cn}

\author[3]{Jinchao Xu}
\address[3]{Department of Mathematics, Pennsylvania State University}
\ead{xu@math.psu.edu}

\begin{abstract}
This paper studies numerical solutions for parameterized partial differential equations (PDEs) with deep learning. Parametrized PDEs arise in many important application areas, including design optimization, uncertainty analysis, optimal control, and inverse problems. The computational cost associated with these applications using traditional numerical schemes can be exorbitant, especially when the parameters fall into a particular range, and the underlying PDE model is required to be solved with high accuracy using a fine spatial-temporal mesh. Recently, solving PDEs with deep learning has become an emerging field in scientific computing. Existing works demonstrate great potentials of the deep learning-based approach in speeding up numerical solutions of various types of PDEs. However, there is still limited research on the deep learning approach for solving parameterized PDEs. If we directly apply existing deep supervised learning models to solving parameterized PDEs, the models need to be constantly fine-tuned or retrained when the parameters of the PDE change. This limits the applicability and utility of these models in practice. To resolve this issue, we propose a meta-learning based method that can efficiently solve parameterized PDEs with a wide range of parameters without retraining. Our key observation is to regard training a solver for the parameterized PDE with a given set of parameters as a learning task. Then, training a solver for the parameterized PDEs with varied parameters can be viewed as a multi-task learning problem, to which meta-learning is one of the most effective approaches. This new perspective can be applied to many existing PDE solvers to make them suitable for solving parameterized PDEs. As an example, we adopt the Multigrid Network (MgNet) \citep{MgNet} as the base solver. To achieve multi-task learning, we introduce a new hypernetwork, called Meta-NN, in MgNet and refer to the entire network as the Meta-MgNet. Meta-NN takes the differential operators and the right-hand-side of the underlying parameterized PDEs as inputs and generates appropriate smoothers for MgNet, which are essential ingredients for multigrid methods and can significantly affect the convergent speed. The proposed Meta-NN is carefully designed so that Meta-MgNet has guaranteed convergence for Poisson's equation. Finally, extensive numerical experiments demonstrate that Meta-MgNet is more efficient in solving parameterized PDEs than the MG methods and MgNet trained by supervised learning.
\end{abstract}

\begin{graphicalabstract}
\includegraphics{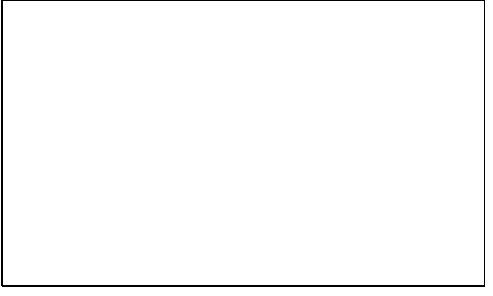}
\end{graphicalabstract}

\begin{highlights}
\item Viewing solving parameterized PDEs as multi-task learning.
\item Introducing meta-learning to solve parameterized PDEs and designing a new data-driven solver called Meta-MgNet.
\item Guaranteed convergence of the proposed Meta-MgNet for Poisson's equation.
\end{highlights}

\begin{keywords}
parameterized PDEs \sep multigrid \sep deep-learning \sep MgNet \sep meta-learning 
\end{keywords}
\maketitle
\section{Introduction}
Partial differential equations are essential tools in many areas, such as physics, chemistry, biology, and economics. Most PDEs we encounter in practice contain parameters representing the system's physical or geometric properties, e.g., kinetic characteristics, material properties, the shape of the domain, etc. In practice, we often found ourselves in multi-query scenarios where the PDEs need to be solved for numerous different parameters with high accuracy and efficiency. Such scenarios include design optimization, optimal control, uncertainty quantification, inverse problems, etc. Therefore, a uniformly efficient solver for all parameters is urgently needed. 

In this paper, we consider the following parameterized steady-state PDEs
\begin{equation}\label{pde}
\left\{
\begin{split}
\utilde {\mathcal A}_{\eta} \  \utilde u &= \utilde f,&\text{ in } \Omega,\\
\utilde u &= \utilde u_b,&\text{ on } \partial \Omega,
\end{split}
\right.
\end{equation}
where $\Omega \subset \mathbb R^d, d, n\in \mathbb N_+$, $\mathfrak U$, $\mathfrak F$ are two function space on $\Omega$, and $\mathfrak U_b$ is a function space on $\partial \Omega$, $\utilde u = (u^1, u^2,..., u^n)\in \mathfrak U^n$, $\utilde f = (f^1, f^2,..., f^n)\in\mathfrak F^n$, $\utilde u_b = (u_b^1, u_b^2,..., u_b^n)\in \mathfrak U_b^n$. And $\utilde {\mathcal A}_{\eta}:\mathfrak U^n\rightarrow\mathfrak F^n$ is a linear differential operator with parameter $\eta=(\eta_1,\ldots,\eta_m)$. For convenience, we will omit $\eta$ when there is not any confusion. In this paper, the specific PDEs we choose for our numerical experiments are 2D/3D steady-state anisotropic diffusion equations and Ossen equations. The steady-state diffusion equations are widely used in fluid mechanics, electronic science, image processing, etc. The Ossen equations play an important role in fluid mechanics. We recall these PDEs as follows:
\begin{enumerate}[(1)]
	\item 2D anisotropic diffusion equations: \begin{equation*}
	\left\{
	\begin{split}
	-\nabla\cdot(C \nabla u) &= f, &\text{ in } \Omega,\\
	u &= 0,  &\text{ on } \partial \Omega,
	\end{split}
	\right.
	\end{equation*}
	where $C = C(\epsilon, \theta)= \begin{pmatrix}
	\cos \theta& -\sin\theta\\\sin\theta&\cos \theta
	\end{pmatrix}
	\begin{pmatrix}
	1& 0\\0&\epsilon
	\end{pmatrix}
	\begin{pmatrix}
	\cos \theta& \sin\theta\\-\sin\theta&\cos \theta
	\end{pmatrix}$ is a $2\times 2$ matrix, $\epsilon<1, \theta\in[0,\pi].$
	\item 3D anisotropic diffusion equations:\begin{equation*}
	\left\{
	\begin{split}
	-\nabla\cdot(C \nabla u) &= f, &\text{ in } \Omega,\\
	u &= 0,  &\text{ on } \partial \Omega,
	\end{split}
	\right.
	\end{equation*}
	on domain $\Omega = [0,1]^3$. In this paper, we only concern the case with $$C = 
	\begin{pmatrix}
	\epsilon_0& &\\&\epsilon_1&\\&&\epsilon_2
	\end{pmatrix},\quad \epsilon_0, \epsilon_1, \epsilon_2>0.
	$$
	\item Ossen equations:
	\begin{equation*}
		\left\{\begin{aligned}
			-\mu \Delta \utilde u+(\utilde a \cdot \nabla)\utilde u + \nabla p &=  \utilde f,  &\text{in } \Omega,\\
			-\text{div} \utilde u &= \utilde 0,  &\text{in } \Omega,\\
			\utilde u &= \utilde 0,    &\text{on }  \partial \Omega.
		\end{aligned}\right.
	\end{equation*}
	where $\mu = \dfrac 1{Re}$ and $Re$ is Reynold number, and $\utilde a = (a_x, a_y)^\top$.
\end{enumerate}

When discretized, equation \eqref{pde} becomes a linear system of equations
\begin{equation}\label{pde:discrete}
\mathbf A_\eta \mathbf u = \mathbf f.
\end{equation}
Linear system \eqref{pde:discrete} is usually of a very large scale in practice, and iterative methods are often used to solve it. The multigrid (MG) method \citep{xu1992iterative,xu2002method, hackbusch2013multi} is one of the most compelling classical methods. The computational complexity of the MG method for the Poisson's equation is only $O(n)$, where $n$ is the size of the matrix; as compared to another popular high-performance numerical method, the spectral method, whose complexity is $O(n\text{log}n)$. However, the MG method still has trouble solving PDEs \eqref{pde} with $\eta$ falling into a specified range. Taking the 2D anisotropic diffusion equation as an example \citep{yu2013analysis}, the computational cost of the MG method grows rapidly with $\epsilon\rightarrow0$ (see \textbf{Figure \ref{fig:1}}).
\begin{figure}
    \centering
    \includegraphics[width = 0.4\textwidth]{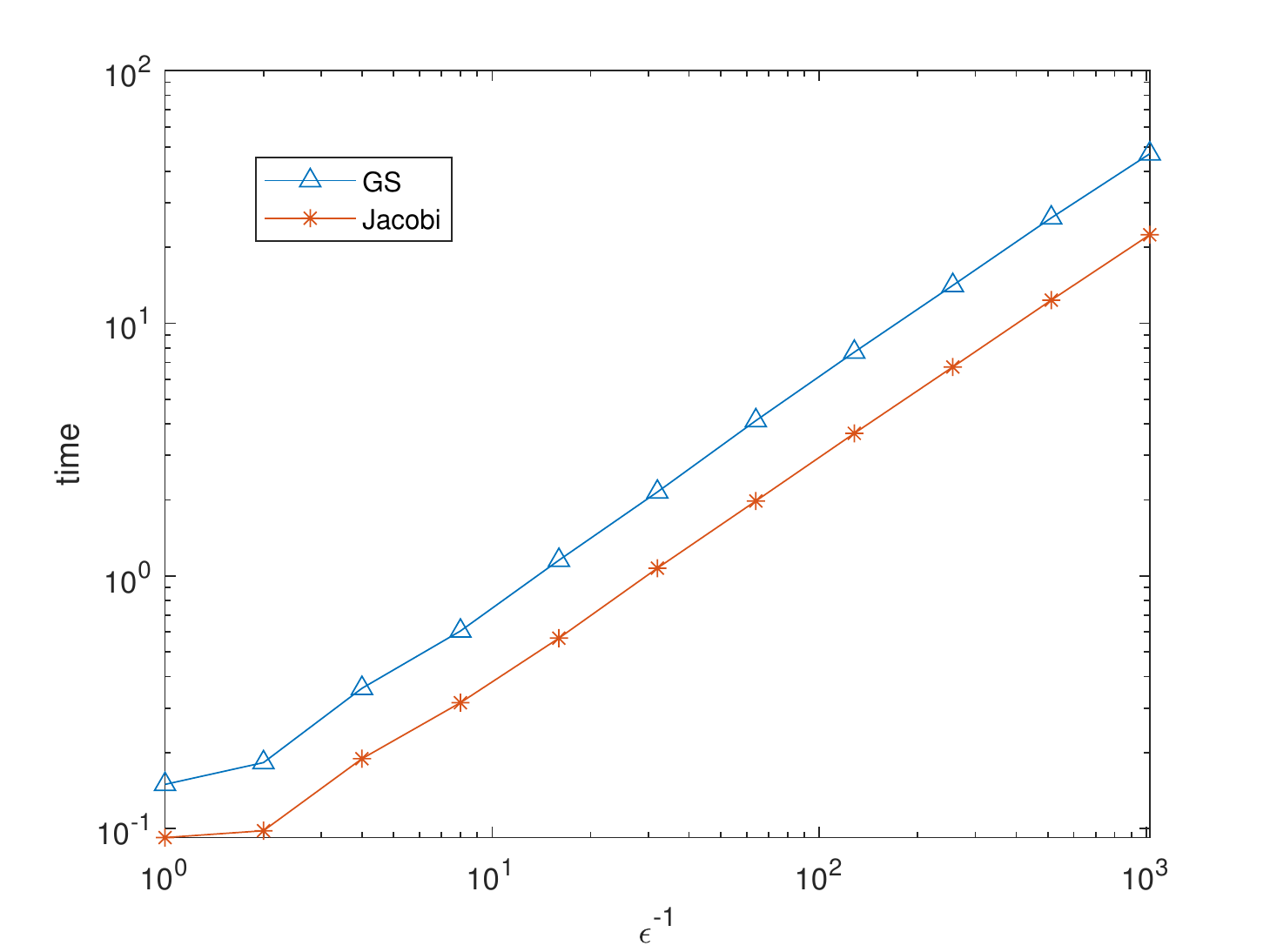}
    \caption{The wall times of convergence grows rapidly with $\epsilon\rightarrow0$.}
    \label{fig:1}
\end{figure}

Therefore, people try to adjust the parameters and components in the MG method (such as smoothers, prolongations, and restrictions) to improve its performance according to $\eta$. Although decades of continuing researches are devoted to the convergence theory of the MG method to find the theoretically best parameters, the computational cost of finding such optimal parameters for a given $\eta$ can be much higher than solving the linear system \eqref{pde:discrete} itself. For example, \citep{xu2017algebraic} derives the theoretically optimal prolongation for a given smoother. However, it requires solving an eigenvalue problem. Crucial parameters such as the damped coefficient of damped Jacobi smoother and the prolongations and restrictions of the algebraic MG method are mostly designed by human experts. However, these manually designed parameters can be rather complicated and have limited improvements for MG methods in practice. 

Before the rise of deep learning, machine learning had a similar challenge as MG methods. Classical machine learning heavily relies on feature engineering, where people tried to manually design various types of features that are later fed into a classification or regression model. However, the quality of the features depends on the data set, the downstream task, and also the choice of the classification and regression model. It is extremely difficult to design fully adaptive and good feature extractors purely based on the human experience. This is where deep learning has been proven tremendously effective. After the advent of deep learning, feature extractors can be learned directly from data \citep{lecun2015deep} in an end-to-end fashion. This often leads to feature extractors significantly surpass the previous ones designed by human experts in performance. This enables deep learning to achieve enormous success in many areas of artificial intelligence, such as natural language processing \citep{hinton2012deep} and games\citep{silver2016mastering}. 

The success of deep learning motivated us to resort to a data-driven approach to determine MG methods' parameters. Among all deep learning approaches, deep supervised learning has become one of the most popular data-driven approaches in scientific computing. Therefore, using deep supervised learning to determine the parameters of the MG method is a natural choice. Since classical MG methods do not work well for certain $\eta$, we can learn the parameters in the MG method to make it uniformly efficient for all $\eta$. 

Deep supervised learning can be easily applied to the MG method if we cast the MG method into a similar formulation as deep neural networks. As first observed in \citep{MgNet}, the MG method is an iterative method which can be viewed as a feedforward network. Furthermore, the prolongations, restrictions, and some special smoothers can all be written as convolutions. Therefore, the MG method has a natural connection with the convolutional neural network (CNN). With that, \citep{MgNet} introduced the Multigrid Network (MgNet). 

The original MgNet in \citep{MgNet} was proposed for image classification. In this paper, we convert it into a form suitable to solve PDEs and refer to it as PDE-MgNet. PDE-MgNet takes the right-hand-side function $\utilde f$ as input and the approximated solution $\utilde u$ as output. PDE-MgNet performs very well when it is trained on a data set generated by $\eta\sim \mathcal{P}$ in PDEs \eqref{pde} and tested on some other $\eta$ generated from the same distribution $\mathcal{P}$. However, it may generalize poorly (i.e., convergence slows down significantly) beyond the training setting, i.e., for $\eta\sim\mathcal{P}'$ with $\mathcal{P}'$ different from $\mathcal{P}$.

This problem is common for deep supervised learning. To resolve it, we need to adopt a more robust learning mechanism. In this paper, we regard learning a solver for PDEs \eqref{pde} with a given $\eta$ as one learning task. Then learning solvers for PDEs \eqref{pde} for all $\eta$ can be naturally viewed as a multi-task learning problem. In the area of artificial intelligence, meta-learning is an effective approach to solve multi-task learning problems. Thus we propose to use the meta-learning approach to improve PDE-MgNet. Note that PDE-MgNet is just an example we choose in this paper. A similar methodology can be applied to enhance other numerical solvers for parameterized PDEs.

First, let us briefly review meta-learning. Meta-learning \citep{Thrun1998LearningTL, hospedales2020metalearning}, also known as learning to learn, is a science of studying how different machine learning approaches perform on a wide range of tasks and use these experiences to speed up the learning of new tasks. In contrast, supervised learning trains a model $\mathrm M_\eta$ for each task $\mathrm T_\eta$ separately. Even though all $\mathrm M_\eta$ have the same network structures, we need to retrain the model each time $\eta$ changes. If there are numerous tasks to learn, the computation cost can be unbearable for supervised learning. The meta-learning approach resolves this problem by gaining experience over multiple learning episodes - often covering the distribution of related tasks - and uses this experience to improve its future learning performance.

There are several strategies in meta-learning. We briefly introduce two strategies that are suitable for our task of interest. Suppose the only difference between $\mathrm M_\eta$ is the weights of neural networks, namely $$\text{M}_\eta = \text{M}(\mathbf x; \mathbf w_\eta),$$
 where $\mathbf x$ is the input and $\mathbf w_\eta$ is the weights of neural networks. The two strategies to quickly find suitable $\mathbf{w}_\eta$ for each task $\eta$ are given as follows. 
\begin{enumerate}[(1)]
	\item Finding a good initialization \citep{Model-Agnostic_Meta-Learning_for_Fast_Adaptation_of_Deep_Networks, On_First-Order_Meta-Learning_Algorithms}:
	This strategy makes use of a series of tasks $\text{T}_{\eta_1}, \text{T}_{\eta_2},..., \text{T}_{\eta_n} $ to obtain a good initial weights $\mathbf w_0$ of the deep neural network $\text{M}$. Then, we can easily get  $\mathbf w_\eta$ for each task $\eta$ by fine-tuning from $\mathbf w_0$. For example, suppose the task $\mathrm T_\eta$ is sampled from distribution $\mathcal{P}(\mathrm T)$, the loss of model $\mathrm M$ on the task $\mathrm T$ is $L_{\mathrm T}(M)$, and we use gradient descent with learning rate $\alpha$ to train the model. After one step of gradient descent starting from $\mathbf w_0$, we obtain the updated weights as $\mathbf w' = \mathbf w_0 - \alpha\nabla_{\mathbf w}L_{\mathrm T_\eta}(\mathrm M_{\eta}(\cdot;\mathbf w_0 ))$.
	Thus, the initial weights $\mathbf w_0$ should minimize the following expectation
	\begin{equation}\label{MAML}
	\mathop{\mathbb	E}_{\mathrm T_\eta \sim \mathcal{P}(\mathrm T)}L_{\mathrm T_\eta}(\mathrm M_{\eta}(\cdot; \mathbf w' )).
	\end{equation}
	This is the main idea of Model-Agnostic Meta-Learning (MAML) proposed by \citep{MetaPruning_Meta_Learning_for_Automatic_Neural_Network_Channel_Pruning}, where an algorithm solving \eqref{MAML} is also proposed to estimate $\mathbf w_0$.

	\item Hypernetwork \citep{HyperNetworks, Stochastic_Hyperparameter_Optimization_through_Hypernetworks, One-Shot_Model_Architecture_Search_through_HyperNetworks, zhang2020metainvnet}: This strategy relies on the designs of a network called hypernetwork to infer $\mathbf w_\eta$ instead of direct learning of $\mathbf w_\eta$ by training. The hypernetwork takes the information on the task $\eta$ as input and $\mathbf w_\eta$ as output. The hypernetwork are trained to make an accurate prediction on $\mathbf w_\eta$ for each task $\eta$. When the hypernetworks are not powerful enough to make accurate predictions on $\mathbf w_\eta$, we treat the approximation of $\mathbf w_\eta$ as a good initial value and resort to fine-tuning on each task $\eta$ to improve the prediction. In particular, if the output of a hypernetwork is the same for all $\eta$, we can think that the hypernetwork gives a good initialization for all tasks $T_\eta$. In this regard, the previous strategy can be viewed as a special case of the hypernetwork approach.  
\end{enumerate}

By viewing solving the parameterized PDE \eqref{pde} for a given $\eta$ as a task $\mathrm T_\eta$, we adopt the hypernetwork based meta-learning approach to improve upon the PDE-MgNet. With this, we introduce a new model called Meta-MgNet. Compared to PDE-MgNet, the Meta-MgNet uses a carefully designed hypernetwork to infer the model parameters of the MgNet, instead of learning it directly from data. The hypernetwork grants the Meta-MgNet great ability of in-distribution generalization and out-of-distribution transfer. We shall call this hypernetwork Meta-NN. The Meta-NN is used to infer parameters of specific components in the MgNet. In this paper, we select two types of smoother as an example and design the corresponding meta-smoother (i.e., using Meta-NN to infer parameters of the smoother) for the Meta-MgNet. The two types of smoother are the convolution smoother, which is exactly what MgNet uses, and the smoother based on subspace correction. For the convolution smoother, the Meta-NN infers its convolution kernels. For the smoother based on subspace correction, the Meta-NN infers the spanning vectors of the subspace. The parameters of the Meta-NN is first trained on a data set with mixed data from different $\eta$. Then, we can fine-tune the Meta-NN while solving a specific $\mathrm T_\eta$. However, our numerical experiments show that the weights given by Meta-NN without retraining are good enough. Therefore, we shall skip the retraining step in the experiments.

\subsection{Related Work}
In the area of solving PDEs by machine learning, especially deep learning, the existing algorithms can be divide into the following two categories. 
\begin{enumerate}[(1)]
	\item Using neural networks to approximate the function $\utilde u$: These algorithms are suitable for a PDE with a fixed differential operator $\utilde{A}$ and the right-hand side $\utilde f$. The most notable advantage of this approach is that they can: 1) overcome the curse of dimensionality and solve high-dimension PDEs; 2) resolve complex geometries in the solution due to the meshless representation of neural networks. Successful examples include the nonlinear convection-diffusion PDEs \citep{2017arXiv171110561R, 2017arXiv171110566R}, Riemann Problem \citep{2019arXiv190412794M}, high-dimension PDEs\citep{Beck2019, E2018, Han8505, SIRIGNANO20181339}, and others \citep{xu2020dlga, zhang2020PINN, WANG2020108963, liu2020multi, sheng2020pfnn}. Nevertheless, if the parameter $\eta$ or the right-hand-side function $\utilde f$ is changed, the neural network often needs to be retrained. 
	\item  Using neural networks to approximate the solution operator $\utilde{\mathcal A}^{-1}$: The general modeling strategy of methods in this category is to replace a portion of the classical numerical method with neural networks to improve its performance. For example, \citep{RAY2019108845} uses NN to estimate locations of discontinuous, \citep{Discacciati:263616} uses NN to introduce an appropriate amount of artificial dissipation in the numerical solver. There are also works using NN to approximate the entire operator $\utilde{\mathcal A}^{-1}$. For example, \citep{LearnSolver} trains a U-Net as a solver for Poisson's equations. The most related work to the current one is \citep{MetaLearningPDE}, where the authors use a meta-learning approach for parameterized pseudo-differential operators with deep neural networks. However, there are two main differences between their work and ours. Firstly, their idea is to find the map $\eta \mapsto \utilde{\mathcal A}^{-1}_\eta$ directly. Ours is based on the observation that learning solvers for PDE \eqref{pde} with different $\eta$ is a multi-task learning problem and then introduce meta-learning to solve it. Secondly, their approach is based on the wavelet method, while ours is based on the MgNet.
	
	There are also several studies on improving the MG method by deep-learning. For instance, \citep{katrutsa2017deep, greenfeld2019learning, luz2020learning} proposed a series of data-driven approaches to design prolongations and restrictions in MG. However, these methods are based on supervised learning while we focus on learning smoothers based on meta-learning.  
\end{enumerate} 

In addition to the approaches above, the reduced-order modeling (ROM) \citep{ReducedOrder2009, ModelReductionSurvey} is also a widely used method for solving parameterized PDEs. The objective of ROM is to generate reduced models that are cheaper to solve while still well approximate the original PDEs. For example, \citep{ModelReduction2020} proposes a novel framework for projecting dynamical systems onto nonlinear manifolds using minimum-residual formulations at the time-continuous and time-discrete levels; \citep{fresca2020comprehensive} proposes new deep learning-based nonlinear ROM.

The remaining sections are organized as follows. In section 2, we will introduce necessary notations for the rest of the paper and discuss how to represent discrete PDEs by convolutions. In section 3, we review the multigrid method and the PDE-MgNet. In section 4, we introduce the proposed Meta-MgNet and provide a preliminary convergence analysis of the algorithm. In section 5, we present the numerical experiments and comparisons among the classical MG methods, PDE-MgNet, and Meta-MgNet. Concluding remarks are given in section 6.

\section{Convolutions and Differential Operators}
The key to solving PDEs is to discretize the differential operators properly. The main goal of this section is to present a theorem that convolutions can express the most meaningful discretizations of differential operators. This theorem plays an essential role for us to rewrite traditional numerical solvers as CNNs. The MgNet introduced by \citep{MgNet} is an example, which is a reformulation from the MG method. Furthermore, we will introduce the definition of convolution operators and then show how to use convolutions to represent discrete forms of differential operators.

\subsection{Convolution Operators}
In this paper, we only consider the convolution of two second-order tensors and the convolution of a fourth-order tensor and a third-order tensor. Consider two second order tensors $\mathsf K = (\mathsf K_{\jmath,\imath})$, with $\jmath,\imath \in \mathbb Z$ and $\mathsf v = (\mathsf v_{j, i})$, with $ i \in \{0,1,...,I\}, j\in\{0,1,...,J\}$, i.e.
	\begin{equation*}
	\mathsf K = \begin{pmatrix}
	\ddots&&\vdots&&\udots\\
	&\mathsf K_{-1,-1} &\mathsf K_{-1,0}& \mathsf K_{-1,1}&\\
	\cdots&\mathsf K_{0,-1} &\mathsf K_{0,0}& \mathsf K_{0,1}&\cdots\\
	&\mathsf K_{1,-1} &\mathsf K_{1,0}& \mathsf K_{1,1}&\\
	\udots&&\vdots&&\ddots\\
	\end{pmatrix}\quad\mbox{and}\quad
	\mathsf v =
	\begin{pmatrix}
	\mathsf v_{0,0} &\mathsf v_{0,1}&\cdots&\mathsf v_{0,I}\\
	\mathsf v_{1,0} &\mathsf v_{1,1}&\cdots&\mathsf v_{1,I}\\
	\vdots&\vdots&\ddots&\vdots\\
	\mathsf v_{J,0} &\mathsf v_{J,1}&\cdots&\mathsf v_{J,I}
	\end{pmatrix}.
	\end{equation*}

	The convolution $\mathsf K\star \mathsf v$ is also a second-order tensor, and the values of its components are
	\begin{equation*}
	(\mathsf K \star \mathsf v)_{j,i} = \sum _{\jmath,\imath\in \mathbb Z} \mathsf K_{\jmath,\imath} \mathsf v_{j+\jmath,i+\imath},
	\end{equation*}
	where $i \in \{0,1,...,I\}, j\in\{0,1,...,J\}$. If $i+\imath\notin \{0,1,...,I\}$ or $j+\jmath\notin \{0,1,...,J\}$, we set $\mathsf v_{j+\jmath,i+\imath}=0$.
	
	Consider a forth-order tensor $\mathsf K$ and a third-order tensor $\mathsf v$, where $\mathsf K = (\mathsf K_{l,k,\jmath,\imath})$, with $l,k \in \{1,2,...,S\}, \jmath,\imath \in \mathbb Z$ and $\mathsf v = (\mathsf v_{k, j, i})$, with $k \in \{1,2,...,S\}, i \in \{0,1,...,I\}, j\in\{0,1,...,J\}$. The definitions of convolution, convolution with stride and deconvolution (the transpose of convolution) are listed below
	\begin{enumerate}[(1)]
	\item \textbf{Convolution}: The convolution $\mathsf K\star \mathsf v$ is a third order tensor and the value of its components are
	\begin{equation*}
	(\mathsf K \star \mathsf v)_{l,j,i} =\sum_{k=1}^S \sum _{\jmath,\imath\in \mathbb Z} \mathsf K_{l,k ,\jmath,\imath} \mathsf v_{k , j+\jmath,i+\imath}.
	\end{equation*}
	\item \textbf{Convolution with strides}: Suppose $I_s, J_s \in \mathbb N^+$, and we use $\star_{J_s,I_s}$ to express the convolution with stride$=(J_s, I_s)$ and the components of $\mathsf K \star_{J_s,I_s} \mathsf v$ are
	\begin{equation*}
	(\mathsf K \star_{J_s,I_s} \mathsf v)_{l,j,i} =\sum_{k=1}^S \sum _{\jmath,\imath\in \mathbb Z} \mathsf K_{l,k ,\jmath,\imath} \mathsf v_{k , jJ_s+\jmath,iI_s+\imath}.
	\end{equation*}
	If $I_s=J_s$, we can write $\star_{J_s,I_s}$ briefly as $\star_{J_s}$.
	\item \textbf{Deconvolution}: Suppose $I_s, J_s \in \mathbb N^*$, and we use $\star^{J_s,I_s}$ to express deconvolution with strides$=(J_s, I_s)$ and the components of $\mathsf K \star^{J_s,I_s} \mathsf v$ are
	\begin{equation*}
	(\mathsf K \star^{J_s,I_s} \mathsf v)_{l,jJ_s+j^\prime,iI_s+i^\prime} =\sum_{k=1}^S \sum _{\jmath ,\imath\in \mathbb Z} \mathsf K_{l,k ,\jmath J_s + j^\prime,\imath I_s+i^\prime} \mathsf v_{\kappa , j+\jmath,i+\imath}.
	\end{equation*}
	If $I_s=J_s$, we can write $\star^{J_s,I_s}$ briefly as $\star^{J_s}$. We can also regard deconvolution as a convolution after upsampling. For example, we have $$\mathsf K \star (\mathsf v \uparrow) = \mathsf K \star^2 \mathsf v.$$
\end{enumerate}

\subsection{Representing Discretized PDEs in Convolutions}

We discuss how to transform the $\utilde {\mathcal A}_\eta$ to the convolution form. We first write components of \eqref{pde} in the following matrix form
\begin{equation}\label{PDEMatrix}
\begin{pmatrix}
\mathcal A_{1,1}&\mathcal A_{1,2}&\cdots &\mathcal A_{1, n}\\
\mathcal A_{2,1}&\mathcal A_{2,2}&\cdots &\mathcal A_{2, n}\\
\vdots&\vdots&\ddots&\vdots&\\
\mathcal A_{n,1}&\mathcal A_{n,2}&\cdots &\mathcal A_{n, n}\\
\end{pmatrix} 
\begin{pmatrix}
u^1\\ u^2\\\vdots \\ u^n
\end{pmatrix}  =
\begin{pmatrix}
f^1\\ f^2\\\vdots \\ f^n
\end{pmatrix},
\end{equation}
where each linear differential operator $\mathcal A_{i,j}(i,j=1,2,...,n)$ is a component of $\utilde {\mathcal A}_\eta$. Thanks to the linear superposition property of each component in \eqref{PDEMatrix}, we only need to consider each component separately, i.e., for given linear differential operator $\mathcal K$ and a functional $f$ in $\mathfrak V'$, find $v\in \mathfrak V$ to satisfy
\begin{equation}\label{pde:one-component}
	\mathcal K  v = f.
\end{equation}
Our goal is to find a kernel $\mathsf K$, and tensors $\mathsf v$ and $\mathsf f$ to represent the discretization of \eqref{pde:one-component} as
\begin{equation}\label{pde:one-component-conv}
	\mathsf K \star \mathsf v = \mathsf f.
\end{equation}
Suppose that the Galerkin method, e.g. the finite difference method (FDM) or finite element method (FEM), is used to discretize the PDEs \eqref{pde:one-component}. Now, we propose a general way to convert \eqref{pde:one-component} into the convolution form \eqref{pde:one-component-conv}. According to the Galerkin method, we first convert PDEs \eqref{pde:one-component} to its weak form: 
\begin{equation}\label{pde:one-component-weak}
 	\text{find } v\in \mathfrak V, \text{such that } \forall w\in \mathfrak V, K(v, w) =  f(w).
\end{equation}
Then, we choose a finite dimensional subspace $\mathfrak V_h \subset \mathfrak V$ to
 discretize \eqref{pde:one-component-weak} and solve the projected problem:
\begin{equation*}
	\text{find } v_h\in \mathfrak V_h,\  \text{such that}\ \forall w_h\in \mathfrak V_h, K(v_h, w_h) =  f(w_h).
\end{equation*}
Let $\Phi$ be a set of basis of $\mathfrak V_h$ and assume that it satisfies the following assumptions.
\begin{assumption}\label{assumption}
Suppose $\Phi$ can be divided into $S$ groups $\Phi_1,..., \Phi_S$, where $\Phi_k = \{\phi_{k,j,i}| i=0,1,2...,I_k, j=0,1,2,...,J_k\}$ and each $\Phi_k$ can be generated by translations along the grid-lines from a compact support function $\varphi_k$, i.e. $\exists\varphi_k$ $\forall i\in\{0,1,2,...,I_k\},j\in\{0,1,2,...,J_k\}$ such that
	\begin{equation*}
	\phi_{k,j,i}(x, y) =  \varphi_{k}(x-ih,y-jh).
	\end{equation*}
\end{assumption}
Then, we have the following theorem stating that the PDE \eqref{pde:one-component} discretized by FDM or FEM can be expressed in convolution form.
\begin{theorem}\label{thm:1}
	If a discretized scheme in FDM or FEM satisfies Assumption \ref{assumption}, the discretized PDE $	\mathcal K  v = f$ can be written in the form $\mathsf K \star \mathsf v = \mathsf f$ with $\mathsf K$ and $\mathsf f$ given as follows
\begin{equation}\label{Kernelvalue}
\mathsf K =	(\mathsf K_{l,k,j,i}) = K(\varphi_{k}(x-ih,y-jh), \varphi_{l}(x, y))\quad\mbox{and}\quad\mathsf f=(\mathsf f_{l,j,i}) = f(\phi_{l,j,i}).
\end{equation}
\end{theorem}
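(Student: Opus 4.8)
The plan is to pass directly from the projected weak problem to the convolution identity by matching indices. First I would expand the discrete solution in the basis, writing $v_h = \sum_{k=1}^{S}\sum_{j,i} \mathsf{v}_{k,j,i}\,\phi_{k,j,i}$, where the coefficients $\mathsf{v}_{k,j,i}$ are collected into the third-order tensor $\mathsf{v}$. Substituting this expansion into the projected problem and testing against each basis function $\phi_{l,j',i'}$, the bilinearity of $K$ turns the Galerkin equations into the linear system $\sum_{k,j,i} \mathsf{v}_{k,j,i}\, K(\phi_{k,j,i},\phi_{l,j',i'}) = f(\phi_{l,j',i'})$, whose right-hand side is by definition $\mathsf{f}_{l,j',i'}$. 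It then remains to show that the stiffness entries $K(\phi_{k,j,i},\phi_{l,j',i'})$ collapse onto the kernel $\mathsf{K}$ given in \eqref{Kernelvalue}.

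The key step, and the crux of the argument, is a translation-invariance property of the bilinear form. Using Assumption \ref{assumption} to write $\phi_{k,j,i}(x,y)=\varphi_k(x-ih,y-jh)$ and $\phi_{l,j',i'}(x,y)=\varphi_l(x-i'h,y-j'h)$, I would apply the simultaneous change of variables $(x,y)\mapsto(x+i'h,y+j'h)$ inside $K$. Since the differential operator $\mathcal K$ has constant coefficients, $K$ is invariant under shifting both of its arguments by the same vector, so this substitution leaves $K$ unchanged while sending the second argument to $\varphi_l(x,y)$ and the first to $\varphi_k(x-(i-i')h,y-(j-j')h)$. Hence $K(\phi_{k,j,i},\phi_{l,j',i'})$ depends on the four indices only through $l$, $k$, and the differences $j-j'$, $i-i'$, and equals exactly $\mathsf{K}_{l,k,j-j',i-i'}$.

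Finally I would reindex the system with $\jmath=j-j'$ and $\imath=i-i'$, rewriting the summation as $\sum_{k=1}^{S}\sum_{\jmath,\imath}\mathsf{K}_{l,k,\jmath,\imath}\,\mathsf{v}_{k,j'+\jmath,i'+\imath}=\mathsf{f}_{l,j',i'}$; comparing with the definition of $\star$ for a fourth-order kernel acting on a third-order tensor, the left-hand side is precisely $(\mathsf{K}\star\mathsf{v})_{l,j',i'}$, which yields the claimed identity $\mathsf{K}\star\mathsf{v}=\mathsf{f}$. Here I would note that the finite index ranges of $\mathsf{v}$ together with the zero-padding convention (setting $\mathsf{v}_{j+\jmath,i+\imath}=0$ outside the admissible range) encode the homogeneous Dirichlet boundary condition, so no separate treatment of boundary basis functions is required.

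I expect the translation invariance of $K$ to be the main obstacle, since it is exactly what makes the otherwise dense stiffness matrix collapse into a single shift-invariant kernel. This is clean only because the coefficients of $\mathcal K$ are constant and the nodes lie on a uniform grid of spacing $h$; the compact support of each $\varphi_k$ then guarantees that $\mathsf{K}_{l,k,\jmath,\imath}$ is nonzero for only finitely many offsets $(\jmath,\imath)$, so the convolution is well defined. The subtleties near $\partial\Omega$, where a translated basis function would protrude outside $\Omega$, are precisely what the index truncation and zero-padding absorb, which is why the argument needs no boundary-specific case analysis.
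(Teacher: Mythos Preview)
Your proposal is correct and follows essentially the same route as the paper: expand $v_h$ in the basis, test against each $\phi_{l,j',i'}$, use translation invariance of $K$ to show the stiffness entry depends only on the index offsets, reindex, and identify the convolution. You are in fact more explicit than the paper about why translation invariance holds (naming the change of variables and the constant-coefficient hypothesis), whereas the paper simply writes the equality $K(\varphi_{k}(x-(\imath+i) h, y -(\jmath+j) h), \varphi_{l}(x-ih, y-jh))=K(\varphi_{k}(x-\imath h, y -\jmath h), \varphi_{l}(x, y))$ without comment.
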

\begin{proof}
	With Assumption \ref{assumption}, we can write $v_h$ as
	\begin{equation*}
	v_h = \sum_{k=1}^S\sum_{\jmath=0}^{J_k}\sum_{\imath=0}^{I_k} \mathsf v_{k,\jmath,\imath}\phi_{k,\jmath,\imath}.
	\end{equation*}
	According to Galerkin method, we have
	\begin{equation*}
	\begin{aligned}
	K(v_h, \phi_{l,j,i}) &= K(\sum_{k=1}^S\sum_{\jmath=0}^{J_k}\sum_{\imath=0}^{I_k} \mathsf v_{k,\jmath,\imath}\phi_{k,\jmath,\imath}, \phi_{l,j,i})=\sum_{k=1}^S\sum_{\jmath=0}^{J_k}\sum_{\imath=0}^{I_k}\mathsf v_{k,\jmath,\imath}K(\phi_{k,\jmath,\imath}, \phi_{l,j,i})\\
	&=\sum_{k=1}^S\sum_{\jmath=0}^{J_k}\sum_{\imath=0}^{I_k}\mathsf v_{k,\jmath,\imath}K(\varphi_{k}(x-\imath h, y -\jmath h), \varphi_{l}(x-ih, y-jh))\\
	&=\sum_{k=1}^S\sum_{\jmath=-j}^{J_k-j}\sum_{\imath=-i}^{I_k-i}\mathsf v_{k,\jmath + j,\imath + i}K(\varphi_{k}(x-(\imath+i) h, y -(\jmath+j) h), \varphi_{l}(x-ih, y-jh))\\
	&=\sum_{k=1}^S\sum_{\jmath=-j}^{J_k-j}\sum_{\imath=-i}^{I_k-i}\mathsf v_{k,\jmath + j,\imath + i}K(\varphi_{k}(x-\imath h, y -\jmath h), \varphi_{l}(x, y)).
	\end{aligned}
	\end{equation*}
	Let $\mathsf K = (\mathsf K_{l,k,j,i})$, where
	\begin{equation}
	\mathsf K_{l,k,j,i} = K(\varphi_{k}(x-ih,y-jh), \varphi_{l}(x, y)).
	\end{equation}
	While $i\notin \{0,1,...,I_k\}$ or $j\notin \{0,1,...,J_k\}$, we set $\mathsf v_{k,j,i} = 0, \mathsf K_{l,k,j,i}=0$.
	Then, we have 
	\begin{equation*}
	K(v_h, \phi_{l,j,i})=\sum_{k=1}^S\sum_{\jmath=-j}^{J_k-j}\sum_{\imath=-i}^{I_k-i}\mathsf v_{k,\jmath + j,\imath + i}\mathsf K_{l,k,\jmath, \imath}=\sum_{k=1}^S\sum_{\jmath\in \mathbb Z}\sum_{\imath\in \mathbb Z}\mathsf v_{k,\jmath + j,\imath + i}\mathsf K_{l,k,\jmath,\imath}=(\mathsf K \star \mathsf v)_{l,j,i}.
	\end{equation*}
	Let
	$\mathsf f = (\mathsf f_{l,j,i})$ with $\mathsf f_{l,j,i} = f(\phi_{l,j,i})$.
	We obtain
	$$K(v_h, \phi_{l,j,i})= (f(\phi_{l,j,i}),\quad \forall l=1,2,...,S,$$  which is the same as \eqref{pde:one-component-conv}.
\end{proof}

Although we have demonstrated a generic method to convert PDEs \eqref{pde:one-component} into the convolution form, it is sometimes inconvenient to use. We can calculate $\mathsf K$ in an easier way for most discretization schemes, as will be described in the remaining part of this subsection. For both FDM and FEM discretization, we assume that the mesh $\mathcal T$ is $N\times N$ uniform triangular or rectangular mesh, and let $h = \dfrac 1N$.

\subsubsection{Finite Difference Methods (FDM)}
FDM is one of the most popular discretizations used for solving PDEs by approximating them with difference equations. The basis functions of FDM are Legendre polynomials. Thus, it is not convenient to use \eqref{Kernelvalue} to calculate $\mathsf K$ and $\mathsf f$. Fortunately, we can use Taylor's expansion to compute entries of $\mathsf K$ and $\mathsf f$.
Furthermore, the functions $v$ and $f$ can be easily discretized by using their values restricted on the mesh $\mathcal T$. As examples, we present four commonly seen cases as follows (see \textbf{Figure \ref{FDMpoint}}).
\begin{enumerate}[(1)]
	\item Vertex of an element: set $\mathsf v_{j,i} = v(ih,jh),\ i,j = 0,1,...,N$;
	\item Midpoint of a horizontal edge: set $\mathsf v_{j,i} = v((i-0.5)h,jh),\ i = 1,2,...,N, j = 0,1,...,N$;
	\item Midpoint of a vertical edge: set $\mathsf v_{j,i} = v(ih,(j-0.5)h),\ i = 0,1,...,N, j = 1,2,...,N$;
	\item Center of an element, set $\mathsf v_{j,i} = v((i-0.5)h,(j-0.5)h),\ i,j = 1,2,...,N$.
\end{enumerate}
\begin{figure}
	\flushleft
	\hspace{2.7cm}
\begin{tikzpicture}[thick,scale=2]
\coordinate (A1) at (0, 0);
\coordinate (A2) at (0, 1);
\coordinate (A3) at (1, 1);
\coordinate (A4) at (1, 0);

\fill (A1) circle (1pt);
\fill (A2) circle (1pt);
\fill (A3) circle (1pt);
\fill (A4) circle (1pt);

\draw[thick] (A1) -- (A2);
\draw[thick] (A2) -- (A3);
\draw[thick] (A3) -- (A4);
\draw[thick] (A4) -- (A1);
\draw[fill=black!20,opacity=0.5] (A1) -- (A2) -- (A3) -- (A4);
\end{tikzpicture}
\quad\quad 
\begin{tikzpicture}[thick,scale=2]
\coordinate (A1) at (0, 0);
\coordinate (A2) at (0, 1);
\coordinate (A3) at (1, 1);
\coordinate (A4) at (1, 0);

\fill (0.5, 0) circle (1pt);
\fill (0.5, 1) circle (1pt);

\draw[thick] (A1) -- (A2);
\draw[thick] (A2) -- (A3);
\draw[thick] (A3) -- (A4);
\draw[thick] (A4) -- (A1);
\draw[fill=black!20,opacity=0.5] (A1) -- (A2) -- (A3) -- (A4);
\end{tikzpicture}
\quad\quad
\begin{tikzpicture}[thick,scale=2]
\coordinate (A1) at (0, 0);
\coordinate (A2) at (0, 1);
\coordinate (A3) at (1, 1);
\coordinate (A4) at (1, 0);

\fill (0, 0.5) circle (1pt);
\fill (1, 0.5) circle (1pt);

\draw[thick] (A1) -- (A2);
\draw[thick] (A2) -- (A3);
\draw[thick] (A3) -- (A4);
\draw[thick] (A4) -- (A1);
\draw[fill=black!20,opacity=0.5] (A1) -- (A2) -- (A3) -- (A4);
\end{tikzpicture}
\quad\quad
\begin{tikzpicture}[thick,scale=2]
\coordinate (A1) at (0, 0);
\coordinate (A2) at (0, 1);
\coordinate (A3) at (1, 1);
\coordinate (A4) at (1, 0);

\fill (0.5, 0.5) circle (1pt);
\fill (0.5, 0.5) circle (1pt);

\draw[thick] (A1) -- (A2);
\draw[thick] (A2) -- (A3);
\draw[thick] (A3) -- (A4);
\draw[thick] (A4) -- (A1);
\draw[fill=black!20,opacity=0.5] (A1) -- (A2) -- (A3) -- (A4);
\end{tikzpicture}
$$(1) \hspace{2.5cm} (2) \hspace{2.5cm} (3) \hspace{2.5cm} (4)$$
\caption{Different case of point-wise discretion.}\label{FDMpoint}
\end{figure}
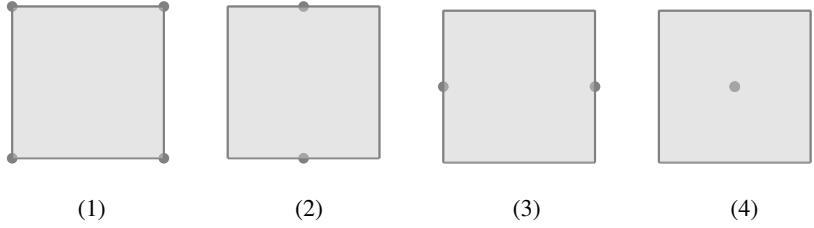

After choosing a discretization for $v$ and $f$, we use Taylor expansion to discretize $\mathcal K$. Suppose $p\in \mathbb N, \bm \alpha = (\alpha_1, \alpha_2), |\bm \alpha |= \alpha_1+\alpha_2, \partial^{\bm \alpha} = \dfrac{\partial^{\alpha_1+\alpha_2}}{(\partial x)^{\alpha_1}(\partial y)^{\alpha_2}}$ and 
$\mathcal{K} = \sum_{|\bm \alpha|\leqslant p} a_{\bm \alpha} \partial^{\bm \alpha}$.
Then, we can obtain the kernel $\mathsf K$ from a difference scheme. If $\forall \bm\alpha$ such that $\bm\alpha\leqslant p$, we have a finite difference approximation of $\partial^{\bm \alpha}$ as
$
\partial^{\bm \alpha}_h \mathsf v_{j,i} \approx \sum_{\jmath,\imath} q^{\bm \alpha}_{\jmath,\imath}\mathsf v_{j+\jmath,i+\imath}
$
and the expression of each component of $\mathsf K$ is
$
\mathsf K_{\jmath, \imath} = \sum_{|\bm \alpha|\leqslant p} \sum_{\jmath,\imath} a_{\bm\alpha}q^{\bm\alpha}_{\jmath, \imath}.
$
Common finite difference approximations of partial derivatives and their corresponding convolution kernels are listed in \textbf{Table \ref{FDMtable}}.
\begin{table}[width=.9\linewidth,cols=3,pos=h]
	\begin{tabular}{|c|c|c|}
		\hline
		$\mathcal K$&difference scheme&kernel $\mathsf K$\\\hline
		\multirow{2}{*}{$\partial_x$}& $\frac{1}{h}(\mathsf v_{i,j}-\mathsf v_{i-1,j})$&$\frac{1}{h}\begin{pmatrix}
		-1& 1& 0
		\end{pmatrix}$\\\cline{2-3}
		&$\frac{1}{h}(\mathsf v_{i+1,j}-\mathsf v_{i,j})$& $\frac{1}{h}\begin{pmatrix}
		0 &-1& 1
		\end{pmatrix}$\\\hline
		\multirow{2}{*}{$\partial_y$}&$\frac{1}{h}(\mathsf v_{i,j}-\mathsf v_{i,j-1})$&
		$\dfrac{1}{h}\begin{pmatrix}
		-1\\ 1\\ 0
		\end{pmatrix}$\\\cline{2-3}
		&$\frac{1}{h}(\mathsf v_{i,j+1}-\mathsf v_{i,j})$&$\dfrac{1}{h}\begin{pmatrix}
		0 \\-1\\ 1
		\end{pmatrix}$\\\hline
		$\partial_{xx}$&$\frac{1}{h^2}(\mathsf v_{i-1,j}+\mathsf v_{i+1,j}-2\mathsf v_{i,j})$ &$\frac{1}{h^2}\begin{pmatrix}
		1& -2& 1
		\end{pmatrix}$\\\hline
		$\partial_{xy}$ &$\frac{1}{4h^2}(\mathsf v_{i-1,j-1}+\mathsf v_{i+1,j+1}-\mathsf v_{i+1,j-1}- \mathsf v_{i-1,j+1})$&$\dfrac{1}{4h^2}\begin{pmatrix}
		1& 0& -1\\
		0& 0& 0\\
		-1& 0& 1\\
		\end{pmatrix}$\\\hline
		$\partial_{yy}$&$\frac{1}{h^2}(\mathsf v_{i,j-1}+ \mathsf v_{i,j+1}-2\mathsf v_{i,j})$&$\dfrac{1}{h^2}\begin{pmatrix}
		1\\ -2\\ 1
		\end{pmatrix}$ \\\hline
		$\Delta$&$\frac{1}{h^2}(\mathsf v_{i-1,j}+\mathsf v_{i+1,j}+\mathsf v_{i,j-1}+ \mathsf v_{i,j+1}-4\mathsf v_{i,j})$ &$\dfrac{1}{h^2}\begin{pmatrix}
		0& 1 &0\\1& -4& 1\\0& 1& 0
		\end{pmatrix}$\\\hline
	\end{tabular}
\caption{Some common finite difference schemes and its corresponding kernels $\mathsf K$.}\label{FDMtable}
\end{table}

\subsubsection{Finite Element Methods (FEM)}

FEM is more complicated than FDM because a variable $v$ usually contains several types of basis functions. Each type of basis functions is a channel of the tensor $\mathsf v$. For most FEM methods on the rectangle mesh, we can divide each basis functions into the following 4 cases based on the support of the functions, i.e. $2\times 2$, $2\times 1$, $1\times 2$, and $1\times 1$ elements (see \textbf{Figure \ref{FEMrectsupport}}).
These four cases can be reduced to the case for FDM.
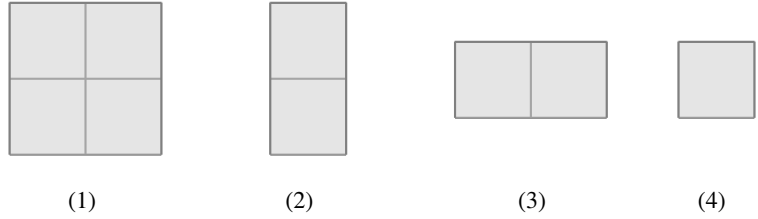
\begin{figure}[pos=htbp]
	\flushleft
	\hspace{3cm} 
	\begin{tikzpicture}[thick,scale=2]
	\coordinate (A1) at (0, 0);
	\coordinate (A2) at (0, 1);
	\coordinate (A3) at (1, 1);
	\coordinate (A4) at (1, 0);
	
	\draw[thick] (A1) -- (A2);
	\draw[thick] (A2) -- (A3);
	\draw[thick] (A3) -- (A4);
	\draw[thick] (A4) -- (A1);
	\draw[thick] (0,0.5) -- (1,0.5);
	\draw[thick] (0.5, 0) -- (0.5, 1);
	\draw[fill=black!20,opacity=0.5] (A1) -- (A2) -- (A3) -- (A4);
	\end{tikzpicture}
	\quad\quad\quad\quad 
	\begin{tikzpicture}[thick,scale=2]
	\coordinate (A1) at (0.25, 0);
	\coordinate (A2) at (0.25, 1);
	\coordinate (A3) at (0.75, 1);
	\coordinate (A4) at (0.75, 0);
	
	\draw[thick] (0.25,0.5) -- (0.75,0.5);
	
	\draw[thick] (A1) -- (A2);
	\draw[thick] (A2) -- (A3);
	\draw[thick] (A3) -- (A4);
	\draw[thick] (A4) -- (A1);
	\draw[fill=black!20,opacity=0.5] (A1) -- (A2) -- (A3) -- (A4);
	\end{tikzpicture}
	\quad\quad\quad\quad 
	\begin{tikzpicture}[thick,scale=2]
	\coordinate (A1) at (0, 0.25);
	\coordinate (A2) at (0, 0.75);
	\coordinate (A3) at (1, 0.75);
	\coordinate (A4) at (1, 0.25);
	\fill (0, 0) circle (0pt);
	\draw[thick] (A1) -- (A2);
	\draw[thick] (A2) -- (A3);
	\draw[thick] (A3) -- (A4);
	\draw[thick] (A4) -- (A1);
	\draw[thick] (0.5,0.25) -- (0.5,0.75);
	\draw[fill=black!20,opacity=0.5] (A1) -- (A2) -- (A3) -- (A4);
	\end{tikzpicture}
	\quad
	\begin{tikzpicture}[thick,scale=2]
	\coordinate (A1) at (0.25, 0.25);
	\coordinate (A2) at (0.25, 0.75);
	\coordinate (A3) at (0.75, 0.75);
	\coordinate (A4) at (0.75, 0.25);
		\fill (0, 0) circle (0pt);
	
	\draw[thick] (A1) -- (A2);
	\draw[thick] (A2) -- (A3);
	\draw[thick] (A3) -- (A4);
	\draw[thick] (A4) -- (A1);
	\draw[fill=black!20,opacity=0.5] (A1) -- (A2) -- (A3) -- (A4);
	\end{tikzpicture}
	$$\hspace{3cm} (1) \hspace{2.5cm} (2) \hspace{2.7cm} (3) \hspace{2cm} (4) \hspace{3cm}$$
	\caption{Some usual support of base function in FEM on rectangular mesh.}\label{FEMrectsupport}
\end{figure}

On the triangular mesh, each basis function can be divided into 6 cases, which are shown in \textbf{Figure \ref{FEMtrisupport}} according to the shape of the support of the function. For case (1)$\sim$(4), they can also be reduced to the case of FDM, while for case (5) or (6), we may have to apply \textbf {Theorem \ref{thm:1}} to calculate $\mathsf K$ and $\mathsf f$.
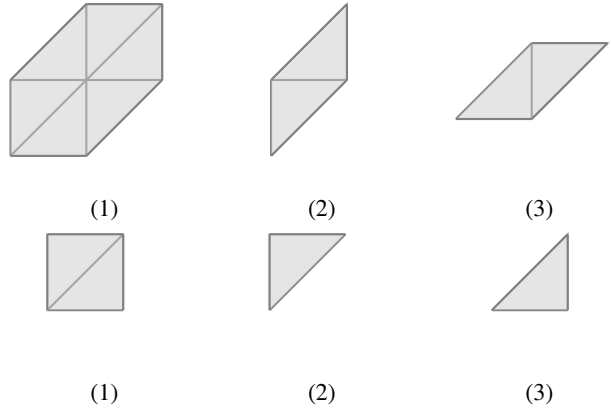
\begin{figure}[pos=htbp]
	\flushleft
	\hspace{4cm} 
	\begin{tikzpicture}[thick,scale=2]
	\coordinate (A1) at (0, 0);
	\coordinate (A2) at (0.5, 0);
	\coordinate (A3) at (1, 0.5);
	\coordinate (A4) at (1, 1);
	\coordinate (A5) at (0.5, 1);
	\coordinate (A6) at (0, 0.5);
	
	\draw[thick] (A1) -- (A2);
	\draw[thick] (A2) -- (A3);
	\draw[thick] (A3) -- (A4);
	\draw[thick] (A4) -- (A5);
	\draw[thick] (A5) -- (A6);
	\draw[thick] (A6) -- (A1);
	\draw[thick] (0,0) -- (1, 1);
	\draw[thick] (0,0.5) -- (1,0.5);
	\draw[thick] (0.5, 0) -- (0.5, 1);
	\draw[fill=black!20,opacity=0.5] (A1) -- (A2) -- (A3) -- (A4)--(A5)--(A6);
	\end{tikzpicture}
	\quad\quad\quad\quad 
	\begin{tikzpicture}[thick,scale=2]
	\coordinate (A1) at (0.25, 0);
	\coordinate (A2) at (0.75, 0.5);
	\coordinate (A3) at (0.75, 1);
	\coordinate (A4) at (0.25, 0.5);
	
	\draw[thick] (0.25,0.5) -- (0.75,0.5);
	
	\draw[thick] (A1) -- (A2);
	\draw[thick] (A2) -- (A3);
	\draw[thick] (A3) -- (A4);
	\draw[thick] (A4) -- (A1);
	\draw[fill=black!20,opacity=0.5] (A1) -- (A2) -- (A3) -- (A4);
	\end{tikzpicture}
	\quad\quad\quad\quad 
	\begin{tikzpicture}[thick,scale=2]
	\coordinate (A1) at (0, 0.25);
	\coordinate (A2) at (0.5, 0.25);
	\coordinate (A3) at (1, 0.75);
	\coordinate (A4) at (0.5, 0.75);
	\fill (0, 0) circle (0pt);
	\draw[thick] (A1) -- (A2);
	\draw[thick] (A2) -- (A3);
	\draw[thick] (A3) -- (A4);
	\draw[thick] (A4) -- (A1);
	\draw[thick] (0.5,0.25) -- (0.5,0.75);
	\draw[fill=black!20,opacity=0.5] (A1) -- (A2) -- (A3) -- (A4);
	\end{tikzpicture}
	
	$$\hspace{4cm} (1) \hspace{2.5cm} (2) \hspace{2.5cm} (3) \hspace{4cm}$$
	
	\hspace{4cm}
	\begin{tikzpicture}[thick,scale=2]
	\coordinate (A1) at (0.25, 0.25);
	\coordinate (A2) at (0.25, 0.75);
	\coordinate (A3) at (0.75, 0.75);
	\coordinate (A4) at (0.75, 0.25);
	\fill (0, 0) circle (0pt);
	
	\draw[thick] (A1) -- (A2);
	\draw[thick] (A2) -- (A3);
	\draw[thick] (A3) -- (A4);
	\draw[thick] (A4) -- (A1);
	\draw[thick] (0.25,0.25) -- (0.75,0.75);
	\draw[fill=black!20,opacity=0.5] (A1) -- (A2) -- (A3) -- (A4);
	\end{tikzpicture}
	\hspace{1.2cm}
	\begin{tikzpicture}[thick,scale=2]
	\coordinate (A1) at (0.25, 0.25);
	\coordinate (A2) at (0.25, 0.75);
	\coordinate (A3) at (0.75, 0.75);
	\fill (0, 0) circle (0pt);
	
	\draw[thick] (A1) -- (A2);
	\draw[thick] (A2) -- (A3);
	\draw[thick] (A3) -- (A1);
	\draw[fill=black!20,opacity=0.5] (A1) -- (A2) -- (A3);
	\end{tikzpicture}
		\hspace{1.2cm}
	\begin{tikzpicture}[thick,scale=2]
	\coordinate (A1) at (0.25, 0.25);
	\coordinate (A3) at (0.75, 0.75);
	\coordinate (A4) at (0.75, 0.25);
	\fill (0, 0) circle (0pt);
	
	\draw[thick] (A1) -- (A3);
	\draw[thick] (A3) -- (A4);
	\draw[thick] (A4) -- (A1);
	\draw[fill=black!20,opacity=0.5] (A1) -- (A3) -- (A4);
	\end{tikzpicture}
	$$\hspace{4cm} (1) \hspace{2.5cm} (2) \hspace{2.5cm} (3) \hspace{4cm}$$
	\caption{Some usual support of base function in FEM on triangular mesh.}\label{FEMtrisupport}
\end{figure}

\section{The Multigrid Method and PDE-MgNet}

In this section, we briefly describe the geometric MG method and PDE-MgNet. MG method is one of the most high-efficiency methods for solving PDEs. We consider the discrete form of the parameterized PDEs \eqref{pde:discrete}.

\subsection{ Multigrid Method}

Iterative method is one of the basic numerical methods for solving the linear system \eqref{pde:discrete}. Given an initial guess $\mathbf u_0$ and an update scheme represented by $\mathcal H$, we can write the iterative method generically as
\begin{equation}\label{iteration}
\mathbf u_{t+1} = \mathbf u_t +\mathcal H (\mathbf u_t),\  t=0,1\ldots,T.
\end{equation}
Note that we can regard \eqref{iteration} as a dynamical system or a feed-forward network. Such perspective is the key to connect deep neural networks (e.g. ResNet \citep{he2016deep}) with dynamic systems, PDEs and optimal control \citep{gregor2010learning,Chen_2015,e2017a,haber2017stable,lu2018beyond,chen2018neural}. 

In addition to the iterative scheme \eqref{iteration}, the residual correction scheme 
\begin{equation}\label{iterationR}
\mathbf u_{t+1} = \mathbf u_t +\mathcal H (\mathbf f - \mathbf A\mathbf u_t), ,\  t=0,1\ldots,T,
\end{equation}
is one of most important types of iterative method. The MG method is one of such iterative methods, which is written as:
 \begin{equation}\label{iterationMg}
 \mathbf u_{t+1} = \mathbf u_t +\mathrm {Mg} (\mathbf f- \mathbf A\mathbf u_t),\  t=0,1\ldots,T,
 \end{equation}
where the Mg operator in \eqref{iterationMg} is given by \textbf{Algorithm \ref{alg:MG-Slash}}.

The MG operator can be divide into two steps, i.e. smoothing and coarse grid correction. The smoothing step is to use a smoother to eliminate high-frequency errors. A smoother $\mathcal B$ (or its matrix form $\mathbf B$) is often an iterative scheme by itself taking the form
\begin{equation*}
\mathbf u_+ = \mathbf u_0 +\mathbf B (\mathbf f - \mathbf A \mathbf u_0).
\end{equation*} 
Popular choices of the smoother in MG include the Jacobi and Gauss-Seidel (GS) smoother, which are listed below
\begin{equation}\label{BaseSmoother}
\mathbf B=\begin{cases}
\text{diag}(\mathbf A)^{-1} & \text{Jacobi }\\
\text{tril}(\mathbf A)^{-1}&\text{GS}
\end{cases}.
\end{equation}

The Jacobi or GS smoother can efficiently eliminate high-frequency approximation errors. However, they are ineffective for low-frequency errors, which is where the coarse grid correction is needed. The MG methods utilize the solution on a coarse grid to approximate the low-frequency error. As a simple example, we illustrate the steps of coarse grid correction in a two-level MG in \textbf{Figure \ref{fig:mg}}.
\begin{figure}[pos=htbp]
	\centering
	\includegraphics[width=0.6\linewidth]{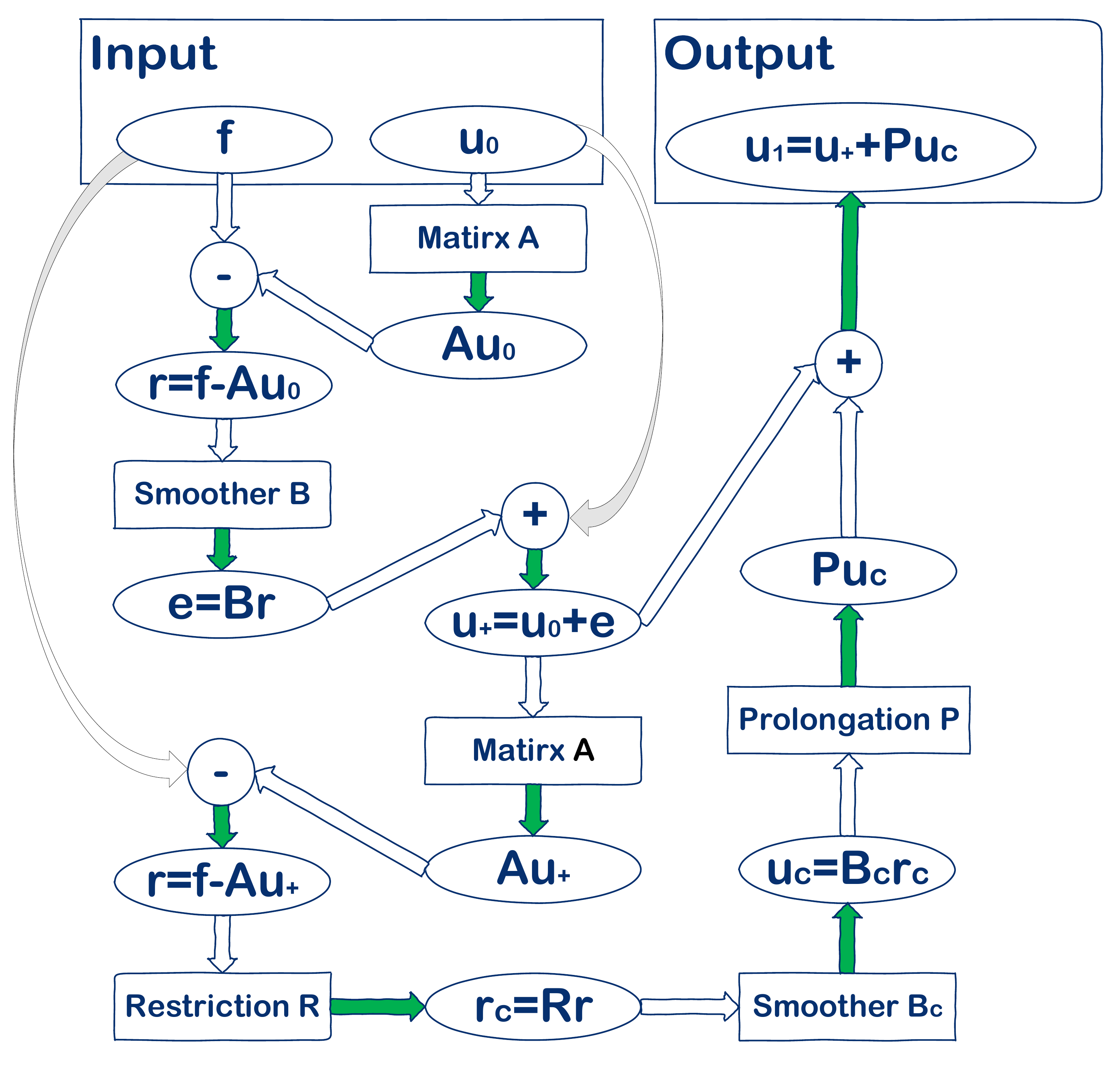}
	\caption{Two level $\backslash$-Cycle MG.}
	\label{fig:mg}
\end{figure}
For the multi-level MG method, if we have a sequence of grids $\mathcal T^1(=\mathcal T), \mathcal T^2,..., \mathcal T^{J}$ and assume that the prolongation and restriction between $\mathcal T^\ell$ and $\mathcal T^{\ell+1}$ are $\mathbf P^\ell_{\ell+1}$ and $\mathbf R^{\ell+1}_{\ell}$. Let $\mathbf A^{\ell+1} = \mathbf R^{\ell+1}_{\ell} \mathbf A^\ell \mathbf P^\ell_{\ell+1}$. The \textbf{Algorithm \ref{alg:MG-Slash}} presents the algorithm of multi-level MG.

\begin{algorithm}[htb]
	\caption{$\mathbf u = {\text{Mg}}(\mathbf f; J,\nu_1, \cdots, \nu_J)$}
	\label{alg:MG-Slash}
	\begin{algorithmic}
		\State Hyper-parameters: number of grids $J$,
		times of smooth in each grid: $\nu_1, \cdots, \nu_J$ 
		\State Input: right hand side $\mathbf f$
		\State Output: approximate solution $\mathbf u$
		\State Initialization:
		$$
		\mathbf f^1 \leftarrow \mathbf f, \quad \mathbf u^{1,0} \leftarrow\mathbf 0, \quad \mathbf r^{1,0} \leftarrow \mathbf f.
		$$
		\State Smoothing and restriction from fine to coarse level (nested)
		\For{$\ell = 1:J$}
		\State Smoothing
		\If{$\ell = J$}
		$$\mathbf u^{\ell,1} \leftarrow (\mathbf A^\ell)^{-1}\mathbf r^{\ell,0}$$
		\Else \For{$i = 1:\nu_\ell$}
		$$\mathbf u^{\ell,i} \leftarrow \mathbf u^{\ell,i-1} +\mathbf B^\ell\mathbf r^{\ell,i-1}$$
		$$\mathbf r^{\ell,i} \leftarrow \mathbf f^\ell - \mathbf  A^\ell\mathbf u^{\ell,i}. $$
	
		\EndFor
		\EndIf
		\State Form restricted residual
		$$
		\mathbf f^{\ell+1} \leftarrow \mathbf R^{\ell+1}_\ell\mathbf r^{\ell,\nu_\ell},
		\quad \mathbf u^{\ell+1,0} \leftarrow \mathbf 0,
		\quad \mathbf r^{\ell+1,0}\leftarrow \mathbf f^{\ell+1}.
		$$
		\EndFor
		\State Prolongation and restriction from coarse to fine level
		\For{$\ell = J-1:1$}
		\State Coarse grid correction
		$$
		\mathbf u^{\ell,\nu_\ell} \leftarrow\mathbf u^{\ell,\nu_\ell} + \mathbf P_{\ell+1}^{\ell}\mathbf u^{\ell+1, \nu_{\ell}}.
		$$
		\EndFor
		
		\State \Return	$\mathbf u = \mathbf u^{1,\nu_1}$.
	\end{algorithmic}
\end{algorithm}

\subsection{PDE-MgNet}

The original MgNet proposed by \citep{MgNet} is a new CNN model for image classification, which is inspired by the connection between CNNs and the MG method. As observed by \citep{MgNet} that the smoothers, prolongations, and restrictions can be represented by convolutions. Thus, we can write the corresponding kernels as $\mathsf B$, $\mathsf P$, and $\mathsf R$. Therefore, the MG method can be naturally reformulated as a CNN model, which was called the MgNet by \citep{MgNet}. 

In this paper, we focus on solving PDEs. Thus, we need to modify the original MgNet in \citep{MgNet} to be suitable for solving PDEs. We shall call the modified MgNet the PDE-MgNet. We formulate every components of PDE-MgNet in the convolution form. For that, we consider the convolution form of PDEs \eqref{pde}:
\begin{equation}\label{pde:discrete_conv}
\mathsf A_\eta \star \mathsf u =\mathsf f.
\end{equation}
PDE-MgNet replaces the smoother $\mathcal B^\ell$, the prolongation $\mathcal P$, the restriction $\mathcal R$ and $\mathbf A^\ell$ in the MG methods described in \textbf{Algorithm \ref{alg:MG-Slash}} with trainable convolution operators. \textbf{Figure} \ref{fig:mgnet} shows the architecture of a two-level $\backslash$-Cycle PDE-MgNet and the multi-level case of PDE-MgNet is presented in \textbf{Algorithm \ref{alg:L-Slash0}}.
\begin{figure}[pos=htbp]
	\centering
	\includegraphics[width=0.6\linewidth]{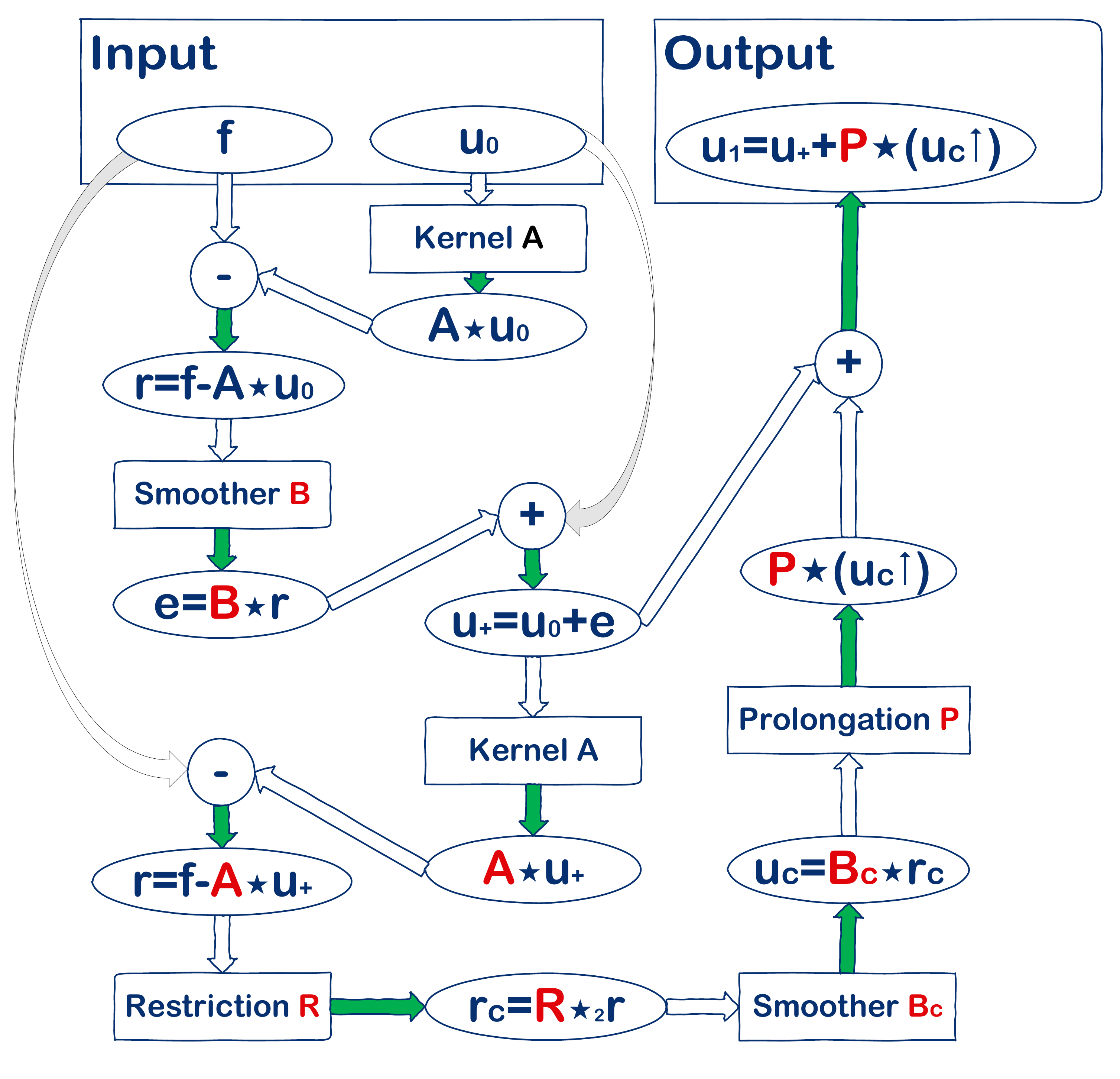}
	\caption{Two level $\backslash$-Cycle PDE-MgNet.}
	\label{fig:mgnet}
\end{figure}

\begin{algorithm}[!htp]
	\caption{$\mathsf u = {\text{PDE-MgNet}}(\mathsf f; J,\nu_1, \cdots, \nu_J)$}
	\label{alg:L-Slash0}
	\begin{algorithmic}
		\State Hyper-parameters: number of grids $J$,
		times of smooth in each grid: $\nu_1, \cdots, \nu_J$ 
		\State Input: right-hand side$\mathsf f$
		\State Output: approximate solution $\mathsf u$
		\State Initialization
		$$
		\mathsf f^1 \leftarrow\mathsf f, 
		\quad\mathsf u^{1,0}\leftarrow\mathsf 0,
		\quad \mathsf r^{1,0} \leftarrow\mathsf f.
		$$
		\State Smoothing and restriction from fine to coarse level
		\For{$\ell = 1:J$}
		
		\State Smoothing:
		\If{$\ell = J$}
		
		Convert $\mathsf r^{\ell, 0}$ into vector form $\mathbf r^{\ell,0}$ and $\mathsf A^{\ell}$ into matrix form $\mathbf A^{\ell}$.
		$$\mathbf u^{\ell,1} \leftarrow (\mathbf A^\ell)^{-1}\mathbf r^{\ell,0}.$$
		
		Convert $\mathbf u^{\ell, 1}$ into tensor form $\mathsf u^{\ell,1}.$
		\Else \For{$i = 1:\nu_\ell$}
		\begin{equation}\label{alg:smoothing}
		  \hspace{5cm}  \mathsf u^{\ell,i} \leftarrow\mathsf u^{\ell,i-1} +\mathsf B^{\ell, i-1} \star \mathsf r^{\ell,i-1},
		\end{equation}
		$$\mathsf r^{\ell,i} \leftarrow \mathsf f^{\ell}- \mathsf A^\ell\mathsf \star\mathsf  u^{\ell,i}.$$
		\EndFor
		\EndIf
		\State Form restricted residual
		$$
		\mathsf  f^{\ell+1} \leftarrow\mathsf  R^{\ell+1}_\ell\star_2\mathsf  r^{\ell,\nu_\ell},
		\quad \mathsf u^{\ell+1,0} \leftarrow 0,
		\quad \mathsf f^{\ell+1,0} \leftarrow \mathsf  f^{\ell+1}.
		$$
		\EndFor
		\State Prolongation from coarse to fine level
		\For{$\ell = J-1:1$}
		\State Coarse grid correction
		$$\mathsf u^{\ell,\nu_\ell} \leftarrow \mathsf u^{\ell,\nu_\ell} + \mathsf P_{\ell+1}^{\ell}\star^2\mathsf u^{\ell+1, \nu_{\ell}}.$$
		\EndFor
		
		\State \Return	$\mathsf u =\mathsf  u^{1,\nu_1}$.
	\end{algorithmic}
\end{algorithm}

With \textbf{Algorithm \ref{alg:L-Slash0}}, we obtain the iterative PDE-MgNet for solving \eqref{pde:discrete_conv}:
\begin{equation}\label{iterationmg}
\mathsf u_{t+1} = \mathsf u_t + \text{PDE-MgNet}(\mathsf f -\mathsf A \star\mathsf u_t),\  t=0,1\ldots,T,
\end{equation}
with $\mathsf u_0=\mathsf 0$. For convenience, we shall refer to the iterative PDE-MgNet \eqref{iterationmg} simply as the PDE-MgNet.  Note that the PDE-MgNet \eqref{iterationmg} is precisely the MG method \eqref{iterationMg} with $\mathcal A$, $\mathcal B$, $\mathcal P$ and $\mathcal R$ replaced by convolutions, and it reduces to the original MgNet proposed by \citep{MgNet} when the prolongation step is replaced by a classifier.

The values of the convolution kernels of PDE-MgNet are learned from data by minimizing a loss function defined similarly as in \citep{2019arXiv190502789L}. Suppose the distribution of the right-and-side function $\mathsf f$ is $F$. If we sample the distribution $F$ by $M_{\text{train}}$ times and denote the training data set as $\mathfrak X_{F}$, then the empirical loss is given by
\begin{equation}\label{Loss2}
Loss	\approx \dfrac 1{M_{\text{train}}} \sum_{\mathsf f\in \mathfrak X_{F}} \dfrac {||\mathsf f -\mathsf A \star \mathsf u_T||^2}{||\mathsf f||^2}.
\end{equation}
In our experiments, we choose $T=1$ following \citep{katrutsa2017deep}.

\section{Meta-MgNet}
The PDE-MgNet suffers from poor generalization when tested on a data set generated from the distribution far away from that of the training set, which significantly limits the practicality and utility of PDE-MgNet. This motivates us to improve PDE-MgNet with meta-learning by introducing a properly designed hypernetwork which infers specific components of the PDE-MgNet according to the parameters $\eta$ of the parameterized PDE to achieve uniformly fast convergence. In this paper, the hypernetwork is introduced to make the smoother $\mathcal B$ in PDE-MgNet PDE-dependent. Now, we shall describe details of the design of such hypernetwork and the architecture of the entire Meta-MgNet.

\subsection{Architecture of Meta-MgNet}

The hypernetwork we introduce to the PDE-MgNet is called Meta-NN. The Meta-MgNet uses Meta-NN to infer an appropriate smoother (called meta-smoother) for each parameter $\eta$. The architecture of the meta-smoother in comparison with the smoother of the PDE-MgNet is presented in \textbf{Figure \ref{ComparisonPDE_Meta}}. The advantage of Meta-MgNet over PDE-MgNet is that the smoother of Meta-MgNet changes according to $\mathsf A_\eta$ and $\mathsf r$, or we can write $\mathcal B=\mathcal B_{\mathsf A_\eta, \mathsf r}$ which is realized by the Meta-NN.
\begin{figure}[pos=htbp]
	\centering
	\includegraphics[width=0.7\linewidth]{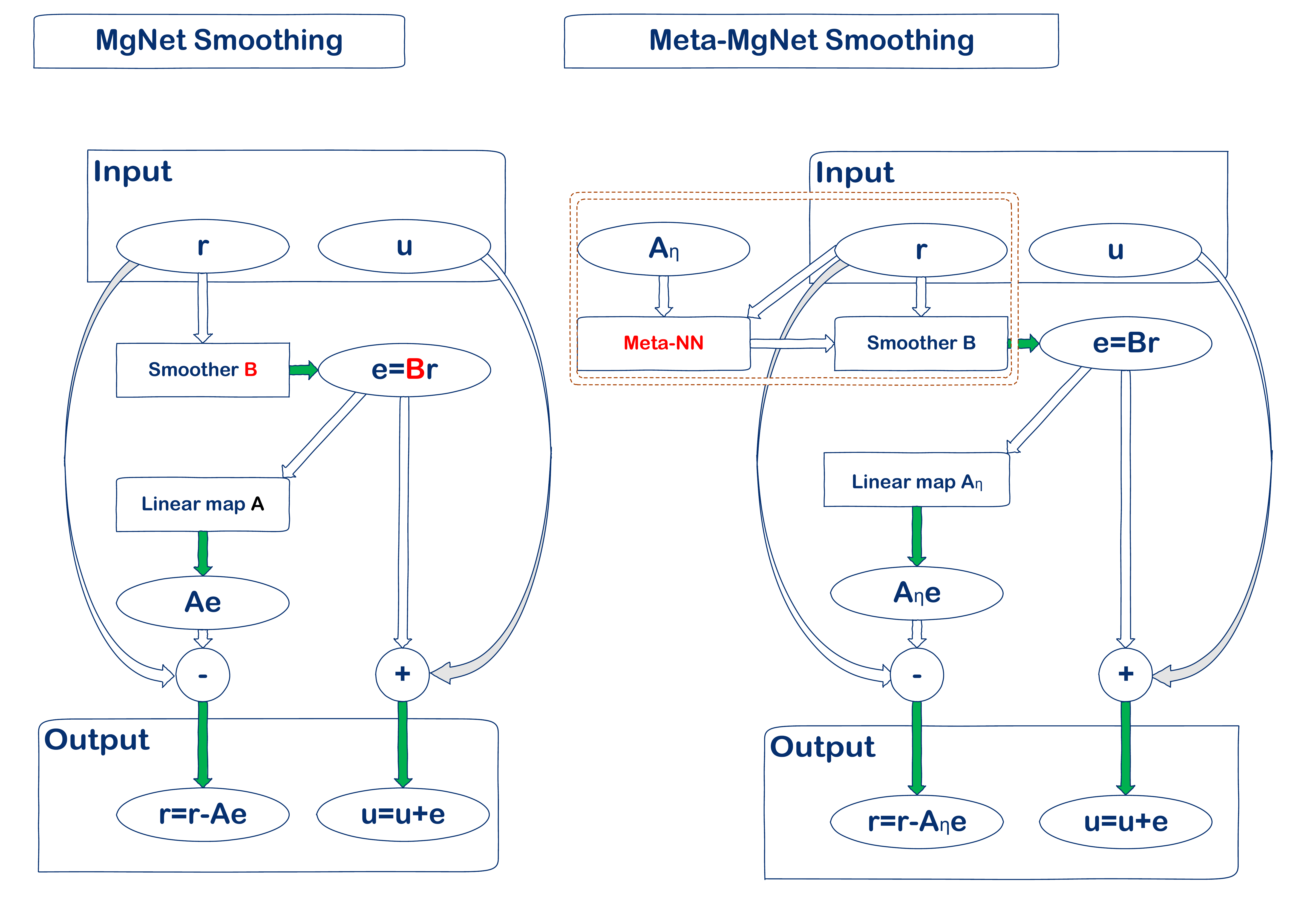}
	\caption{Comparison between PDE-MgNet and Meta-MgNet in smoothing step.  The red rectangle shows the major difference.}
	\label{ComparisonPDE_Meta}
\end{figure}
The entire architecture of the (iterative) Meta-MgNet solving \eqref{pde:discrete_conv} is given by
\begin{equation}\label{Iteration}
\mathsf u_{t+1} = \mathsf u_t + \text{Meta-MgNet}(\mathsf f -\mathsf A_\eta \star\mathsf u_t,\mathsf A_\eta ),
\end{equation}
where the Meta-MgNet$(\cdot)$ is computed using \textbf{Algorithm \ref{alg:L-Slash0}} with \eqref{alg:smoothing} replaced by the meta-smoothing: $$ \mathsf u^{\ell,i} \leftarrow\mathsf u^{\ell,i-1} +\mathcal B^{\ell, i-1}_{\mathsf A_\eta, \mathsf r} (\mathsf r^{\ell,i-1}).$$

In this paper, we consider two different methods to realize the meta-smoother $\mathcal B_{\mathsf A_\eta, \mathsf r}$. The first one is based on the convolutional smoother, and its kernels are inferred from a convolutional hypernetwork. Thus, we call it the direct method, which is natural but mediocre. The second one is based on subspace correction smoother. Using subspace correction as a smoother is not as common as other smoothers such as Gauss-Seidel, but the numerical experiments show it performs better than direct methods.

\subsubsection{Direct Method} In the basic structure of PDE-MgNet, the smoothers are convolutions. Therefore, the direct method is to use Meta-NN to infer the value of these convolution kernels. We can use a vanilla DNN as the Meta-NN. \textbf{Algorithm \ref{alg:Bd}} presents the details of this method.
	\begin{algorithm}[!htb]
		\caption{$\mathcal B = B_{\text{d}}(\mathsf r,\mathsf A_\eta; \mathcal G$)}\label{alg:Bd}
		\begin{algorithmic}
			\State Hyper-parameters: $\mathcal G$
			\State Inputs: $\mathsf r$, $\mathsf A_\eta$, 
			\State Outputs: $\mathcal B$
			
			\State 1. Calculate subspace:	
			$$\mathsf B \leftarrow \mathcal G(\dfrac {\mathsf r}{||\mathsf r||}, \mathsf A_\eta). $$
			\State 2. Define the effect of $\mathcal B$ as $$\mathcal B(\mathsf r) := \mathsf B \star \mathsf r. $$
			\State \Return	$\mathcal B$.
			
		\end{algorithmic}
	\end{algorithm}
As for the structure of Meta-NN, we use a fully connect neural network with two hidden layers with 100 neurons in each layer.  

\subsubsection{Subspace Correction Method} The subspace correction (SC) method is a classical numerical method for solving linear equations. The SC smoother has more flexible parameters than the convolutional smoother in PDE-MgNet. For a linear system $\mathbf A \mathbf u = \mathbf f$, a subspace correction $\mathcal B$ is determined by a subspace $\mathbb G$ which is usually represented by the range of a matrix $\mathbf G$, i.e. $$\mathbb G = \mathrm{span}\{\mathbf g_1, \mathbf g_2,..., \mathbf g_L\}=\mathrm{range}(\mathbf G),$$ where $\mathbf G = (\mathbf g_1,...,\mathbf g_L)$.

Notice that $\mathbf g_i$ has the same dimension as $\mathbf f$ and $\mathbf u$. Thus, we let Meta-NN export multi-channel tensors with the same shape as $\mathbf f$ and then reshape each channel to form $\mathbf g_i$. Details are given by \textbf{Algorithm \ref{alg:Bsc}}. 
\begin{algorithm}[!htb]
	\caption{$\mathcal B =  B_{\text{sc}}(\mathsf r, \mathsf A_\eta; \mathcal G)$}
	\label{alg:Bsc}
	\begin{algorithmic}
		\State Hyper-parameters: Meta-NN $\mathcal G$
		\State Inputs: $\mathsf r$, $\mathsf A_\eta$
		\State Output: $\mathcal B$
		\State 1. Calculate subspace:	$$\mathsf G \leftarrow \mathcal G(\mathsf r, \mathsf A_\eta),$$
		\State where $\mathsf G$ is a tensor with shape $L\times K\times J\times I$.
		\State 2. Reshape the tensor $\mathsf G$ to $L\times KJI$ matrix, and write its transpose as $\mathbf G$, which is a $KJI\times L$ matrix.
		\State 3. Do subspace correction with the subspace $\mathbb G = \text{range}(\mathbf G)$:
		$$\mathbf S_\eta \leftarrow \mathbf A_\eta \mathbf G.$$
		$$\mathbf e = \mathbf G(\mathbf G^\top \mathbf S_\eta)^{-1}{\mathbf G^\top} \mathbf r.$$
		\State 4. Define the effect of $\mathcal B$ as
		$$
		\mathcal B(\mathsf r,\mathbf A_\eta; \mathcal G):=\text{Reshape}(\mathbf e).
		$$
		where $\text{Reshape}(\cdot)$ means to reshape $\mathbf e$ to the same shape as tensor $\mathsf r$.
		\State \Return	$\mathcal B$.
	\end{algorithmic}

\end{algorithm}
The architecture of Meta-NN $\mathcal G$ in \textbf{Algorithm \ref{alg:Bsc}} needs a more careful design than the one in \textbf{Algorithm \ref{alg:Bd}}. To select an appropriate subspace for traditional SC method is also difficult. The most popular choice is the Krylov subspace, i.e. $\mathbb G_{\mathrm K} = \{\mathbf r, \mathbf A \mathbf r, ..., \mathbf A^k\mathbf r\}$. If we write $\mathbb G_{\mathrm K} = \{\mathbf f_0(\mathbf A)\mathbf r, \mathbf f_1(\mathbf A) \mathbf r, ..., \mathbf f_k( \mathbf A)\mathbf r\}$, and $ \mathbf f_i(\mathbf A) = \mathbf A^i$. Inspired by such formulation, we design the Meta-NN in \textbf{Algorithm \ref{alg:Bsc}} as
\begin{equation}\label{Meta-NN}
\mathcal G_\theta(\mathsf r, \mathsf A_\eta)= \mathcal{N}_{\text{FC}_\theta(\mathsf A_\eta)}(\mathsf r).
\end{equation}
Here, $\mathcal N_\gamma$ is a CNN used to convert $\mathsf r$ into a multi-channel tensor, and each channel of the output plays the role as a $\mathbf f_i(\mathbf A)$ in $\mathbb G_{\mathrm K}$. The weights $\gamma$ of $\mathcal N_\gamma$ is the output of the neural network $\text{FC}_\theta$. In this paper, the CNN $\mathcal N_\gamma$ is a 3-layer Dense-Net block\citep{huang2017densely} and $\text{FC}_\theta$ is a 2-layer fully connected neural network. For the Dense-Net block, we use $l$ to represent the channel number. An illustration of this Meta-NN is given in  \textbf{Figure \ref{fig:MetaNN}}.
\begin{figure}
    \centering
    \includegraphics[width = 0.7\textwidth]{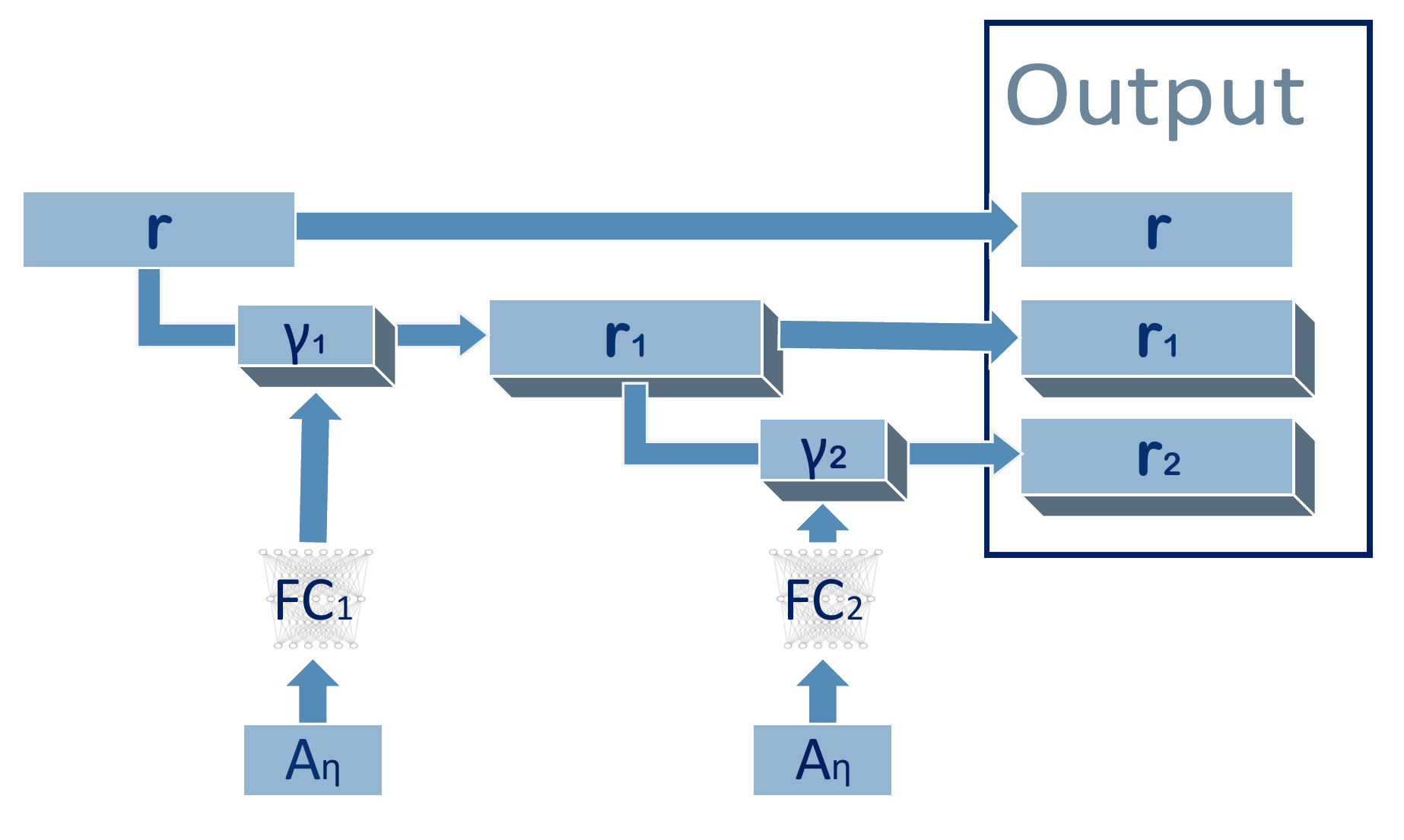}
    \caption{The architecture of Meta-NN with 3 output channels as an example.}
    \label{fig:MetaNN}
\end{figure}

\subsection{Training of Meta-MgNet.}

Suppose the distribution of $\eta$ is $Z$ and for each given $\eta$ the distribution of $\mathsf f$ is $F_\eta$. Let the number of samples of $\eta$ be $M_{\text p}$. For each $\eta$, we sample $\mathsf f$ for $M_{\text{m-train}}$ times and generate the data set $\mathfrak X_{F,\eta}$. Similar as the loss \eqref{Loss2}, we consider the following loss function
\begin{equation*}
Loss=E_{\eta\sim Z, \mathsf f \sim F_\eta} \dfrac {||\mathsf f -\mathsf A_\eta \star \mathsf u_T||^2}{||\mathsf f||^2}\approx \dfrac 1{M_{\text p}M_{\text{m-train}}}\sum_{\eta\in \mathfrak Z}\sum_{\mathsf f\in \mathfrak X_{F,\eta}}\dfrac {||\mathsf f -\mathsf A_\eta \star \mathsf u_T||^2}{||\mathsf f||^2}.
\end{equation*}
In our experiments, we choose $T=1$ following \citep{katrutsa2017deep}.
\begin{remark}
We can fine-tune the trained model when we are given a PDE with a new $\eta$, just like what meta-learning usually does. However, as shown in \textbf{Appendix I}, it brings little benefit and thus we shall omit fine-tuning. 
\end{remark}
\subsection{Convergence Analysis}

This section analyzes the convergence of the proposed Meta-MgNet \eqref{iterationmg} with SC for Poisson's equation. We assume that the discretization schemes is either FDM with 7-point (or 9-point) stencil or FEM with P1 or Q1 elements. Now suppose the approximation of $\mathbf u$ is $\mathbf u_t$, then the two-grid MG iteration can be written as
	\begin{eqnarray*}
		&\mathbf r_{t+\frac{1}{2}}&= \mathbf f-\mathbf A \mathbf u_t = \mathbf A(\mathbf u -\mathbf u_t)\\
		&\mathbf u_{t+\frac{1}{2}}&=\mathbf u_t+\mathbf P\mathbf A_c^{-1} \mathbf P^\top\mathbf r_{t+\frac{1}{2}} \\
		&\mathbf r_{t+1}&= \mathbf f-\mathbf A \mathbf u_{t+\frac{1}{2}} = \mathbf A(\mathbf u -\mathbf u_{t+\frac{1}{2}})\\\\
		&\mathbf u_{t+1} &= \mathbf u_{t+\frac{1}{2}} +\mathbf B_{\mathbf r_{t+1}} \mathbf r_{t+1}.
	\end{eqnarray*}
	Then we have the recurrence relation of the error from $t$ to $t+1$:
	\begin{equation*}
		||\mathbf u - \mathbf u_{t+1}||_{\mathbf A} = ||(\mathbf I - \mathbf B_{\mathbf r_{t+1}}\mathbf A )(\mathbf u - \mathbf u_{t+\frac 12}) ||_{\mathbf A} = ||(\mathbf I - \mathbf B_{\mathbf r_{t+1}}\mathbf A )(\mathbf I - \mathbf C)(\mathbf u - \mathbf u_{t}) ||_{\mathbf A},
	\end{equation*}
	where $\mathbf C = \mathbf P\mathbf A_c^{-1} \mathbf P^\top\mathbf A$.
	
	By \citep{hackbusch2013multi}, if the prolongation $\mathbf P$ is given by 7-point stencil or 9-point stencil, it is obviously that
	\begin{equation}\label{Prolongation1}
		||(\mathbf I -\mathbf C)\mathbf v||^2_{\mathbf A}\leq	||\mathbf v||^2_{\mathbf A}, \ \forall \mathbf v\in \mathbb V_h,
	\end{equation}
    and there is a constant $c_a>0$ s.t. $\forall \mathbf v \in \mathbb V_h$, 
	\begin{equation}
		\label{Prolongation2}
		||(\mathbf I -\mathbf C)\mathbf v||^2_{\mathbf A}\leq c_a \rho{_\mathbf A}^{-1}||\mathbf v||_{\mathbf A^2}^2,
	\end{equation} 
	where $\rho_\mathbf A$ is the spectral radius of $\mathbf A$.
	
	Now, we introduce the following assumptions (see also \citep{xu1989theory, xu1992iterative}).
	\begin{assumption}\label{assumption:convergence}
	For a given $\mathbf r$,  we assume that the associated $\mathbf B_{\mathbf r}$ satisfies:
	\begin{enumerate}
		\item $\mathbf B_{\mathbf r}$ is semi-symmetric positive defined (SSPD) and $\mathbf B_{\mathbf r} \mathbf A\mathbf B_{\mathbf r} = \mathbf B_{\mathbf r}$,
		\item There is a constant $c_s>0$ independent with $\mathbf r$, s.t. 
		\begin{equation}\label{key}
			||\mathbf r||^2\leq c_s\rho_{\mathbf A} \mathbf r^\top\mathbf B_{\mathbf r} \mathbf r.
		\end{equation}
	\end{enumerate}
	\end{assumption}
	Then, we have the following convergence theorem for Meta-MgNet (see also \citep{bank1985sharp}).
	
	\begin{theorem} Let $\{\mathbf u_{t}\}$ be the sequence generated by the Meta-MgNet \eqref{iterationmg}. Then, we have the convergence estimation
		\begin{equation*}
		||\mathbf u - \mathbf u_{t}||_{\mathbf A} \leq \delta^{\frac t2}||\mathbf u - \mathbf u_{0}||_{\mathbf A},\quad\mbox{with}\ \delta = 1-\dfrac{1}{c_a c_s}.
	\end{equation*}
	\end{theorem}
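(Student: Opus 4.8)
The plan is to prove a one-step contraction in the energy norm, $||\mathbf e_{t+1}||_{\mathbf A}^2 \leq \delta\,||\mathbf e_t||_{\mathbf A}^2$ with $\mathbf e_t = \mathbf u - \mathbf u_t$, and then conclude by induction. Introduce $\mathbf w = (\mathbf I - \mathbf C)\mathbf e_t$, the error after the coarse-grid correction, and note that the residual fed to the meta-smoother is $\mathbf r = \mathbf r_{t+1} = \mathbf A\mathbf w$, so the recurrence displayed just before the statement reads $\mathbf e_{t+1} = (\mathbf I - \mathbf B_{\mathbf r}\mathbf A)\mathbf w$. First I would expand $||\mathbf e_{t+1}||_{\mathbf A}^2 = \mathbf w^\top(\mathbf I - \mathbf B_{\mathbf r}\mathbf A)^\top\mathbf A(\mathbf I - \mathbf B_{\mathbf r}\mathbf A)\mathbf w$. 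Using that $\mathbf A$ and $\mathbf B_{\mathbf r}$ are symmetric (the SSPD part of Assumption \ref{assumption:convergence}) gives $(\mathbf I - \mathbf B_{\mathbf r}\mathbf A)^\top\mathbf A(\mathbf I - \mathbf B_{\mathbf r}\mathbf A) = \mathbf A - 2\mathbf A\mathbf B_{\mathbf r}\mathbf A + \mathbf A\mathbf B_{\mathbf r}\mathbf A\mathbf B_{\mathbf r}\mathbf A$, and the relation $\mathbf B_{\mathbf r}\mathbf A\mathbf B_{\mathbf r} = \mathbf B_{\mathbf r}$ collapses the quartic term to $\mathbf A\mathbf B_{\mathbf r}\mathbf A$. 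With $\mathbf A\mathbf w = \mathbf r$ this yields the exact identity
$$||\mathbf e_{t+1}||_{\mathbf A}^2 = ||\mathbf w||_{\mathbf A}^2 - \mathbf r^\top\mathbf B_{\mathbf r}\mathbf r.$$

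Next I would bound the correction term $\mathbf r^\top\mathbf B_{\mathbf r}\mathbf r$ from below, which is where the smoother and prolongation hypotheses are made to cooperate. Property \eqref{key} gives $\mathbf r^\top\mathbf B_{\mathbf r}\mathbf r \geq ||\mathbf r||^2/(c_s\rho_{\mathbf A})$, and since $\mathbf r = \mathbf A\mathbf w$ we have $||\mathbf r||^2 = ||\mathbf w||_{\mathbf A^2}^2$. The key observation is that $\mathbf C = \mathbf P\mathbf A_c^{-1}\mathbf P^\top\mathbf A$ is the $\mathbf A$-orthogonal projection onto $\mathrm{range}(\mathbf P)$: with the Galerkin coarse operator $\mathbf A_c = \mathbf P^\top\mathbf A\mathbf P$ one checks $\mathbf C^2 = \mathbf C$ and that $\mathbf A\mathbf C$ is symmetric, so $\mathbf I - \mathbf C$ is idempotent and $(\mathbf I - \mathbf C)\mathbf w = \mathbf w$. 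Applying the approximation property \eqref{Prolongation2} to $\mathbf w$ itself then gives $||\mathbf w||_{\mathbf A}^2 = ||(\mathbf I - \mathbf C)\mathbf w||_{\mathbf A}^2 \leq c_a\rho_{\mathbf A}^{-1}||\mathbf w||_{\mathbf A^2}^2$, i.e. $||\mathbf w||_{\mathbf A^2}^2 \geq \rho_{\mathbf A}c_a^{-1}||\mathbf w||_{\mathbf A}^2$. Chaining the two inequalities produces $\mathbf r^\top\mathbf B_{\mathbf r}\mathbf r \geq (c_ac_s)^{-1}||\mathbf w||_{\mathbf A}^2$, so that $||\mathbf e_{t+1}||_{\mathbf A}^2 \leq \delta\,||\mathbf w||_{\mathbf A}^2$ with $\delta = 1 - (c_ac_s)^{-1}$.

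Finally I would remove the coarse correction harmlessly using the energy nonexpansiveness \eqref{Prolongation1}, namely $||\mathbf w||_{\mathbf A}^2 = ||(\mathbf I - \mathbf C)\mathbf e_t||_{\mathbf A}^2 \leq ||\mathbf e_t||_{\mathbf A}^2$, which closes the one-step estimate $||\mathbf e_{t+1}||_{\mathbf A}^2 \leq \delta\,||\mathbf e_t||_{\mathbf A}^2$. Induction on $t$ gives $||\mathbf e_t||_{\mathbf A}^2 \leq \delta^t||\mathbf e_0||_{\mathbf A}^2$, and taking square roots yields the stated bound $||\mathbf u - \mathbf u_t||_{\mathbf A} \leq \delta^{t/2}||\mathbf u - \mathbf u_0||_{\mathbf A}$.

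The hard part will be the middle step: the clean lower bound $(c_ac_s)^{-1}||\mathbf w||_{\mathbf A}^2$ requires recognizing that the post-correction error $\mathbf w$ lies in the range of $\mathbf I - \mathbf C$, so that \eqref{Prolongation2} can be invoked with argument $\mathbf w$ rather than $\mathbf e_t$; without this projection structure the prolongation and smoother estimates do not fuse into a single constant. A secondary point needing care is the algebraic collapse in the first step, which genuinely uses $\mathbf B_{\mathbf r}\mathbf A\mathbf B_{\mathbf r} = \mathbf B_{\mathbf r}$ together with the symmetry of $\mathbf B_{\mathbf r}$; otherwise the cross terms would not cancel to give the exact energy identity.
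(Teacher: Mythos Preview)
Your proposal is correct and follows essentially the same route as the paper: the energy identity via symmetry and $\mathbf B_{\mathbf r}\mathbf A\mathbf B_{\mathbf r}=\mathbf B_{\mathbf r}$, the lower bound from \eqref{key}, and the combination of \eqref{Prolongation1}--\eqref{Prolongation2} through the idempotency of $\mathbf I-\mathbf C$ are exactly the steps the paper carries out. The one thing the paper adds that you omit is a verification, after the contraction argument, that the concrete subspace-correction smoother $\mathbf B_{\mathbf r}=\mathbf G(\mathbf G^\top\mathbf A\mathbf G)^{-1}\mathbf G^\top$ from Algorithm~\ref{alg:Bsc} actually satisfies Assumption~\ref{assumption:convergence} (in particular \eqref{key}, which uses that $\mathbf r\in\mathrm{range}(\mathbf G)$ by the design of the Meta-NN); since the theorem is phrased for the Meta-MgNet iterates rather than for an abstract smoother, you should include this check.
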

	\begin{proof}
		It suffices to show that
	\begin{equation}\label{convergence:step}
		||\mathbf u - \mathbf u_{t+1}||_{\mathbf A} \leq \delta^{\frac 12}||\mathbf u - \mathbf u_{t}||_{\mathbf A}.
	\end{equation}
	If $\mathbf B_{\mathbf{r}}$ satisfies \textbf{Assumption \ref{assumption:convergence}}, we have
	\begin{eqnarray*}
		&||\mathbf u - \mathbf u_{t+1}||^2_{\mathbf A} &= ||(\mathbf I - \mathbf B_{\mathbf r_{t+1}}\mathbf A )(\mathbf u - \mathbf u_{t+\frac 12}) ||^2_{\mathbf A} \\
		&&= (\mathbf u - \mathbf u_{t+\frac 12})^\top(\mathbf I - \mathbf A\mathbf B_{\mathbf r_{t+1}} )\mathbf A (\mathbf I - \mathbf B_{\mathbf r_{t+1}}\mathbf A ) (\mathbf u - \mathbf u_{t+\frac 12}) \quad  (\text{by } \textbf B_{\mathbf r_{t+1}} \text{is SSPD})\\
		&& =(\mathbf u - \mathbf u_{t+\frac 12})^\top(\mathbf A -2 \mathbf A\mathbf B_{\mathbf r_{t+1}} \mathbf A +\mathbf A \mathbf B_{\mathbf r_{t+1}}\mathbf A \mathbf B_{\mathbf r_{t+1}}\mathbf A ) (\mathbf u - \mathbf u_{t+\frac 12}) \\
		&&=(\mathbf u - \mathbf u_{t+\frac 12})^\top(\mathbf A - \mathbf A\mathbf B_{\mathbf r_{t+1}} \mathbf A ) (\mathbf u - \mathbf u_{t+\frac 12}) \quad (\text{by } \mathbf B_{\mathbf r_{t+1}} \mathbf A\mathbf B_{\mathbf r_{t+1}} = \mathbf B_{\mathbf r_{t+1}})\\
		&&=|| \mathbf u - \mathbf u_{t+\frac 12}||_{\mathbf A}^2 - (\mathbf u - \mathbf u_{t+\frac 12})^\top \mathbf A\mathbf B_{\mathbf r_{t+1}}\mathbf A(\mathbf u - \mathbf u_{t+\frac 12}) \\
		&&=|| \mathbf u - \mathbf u_{t+\frac 12}||_{\mathbf A}^2 - \mathbf r_{t+1}^\top\mathbf B_{\mathbf r_{t+1}} \mathbf r_{t+1}\\
		&&\leq || \mathbf u - \mathbf u_{t+\frac 12}||_{\mathbf A}^2 - c^{-1}_s\rho^{-1}_{\mathbf A} \mathbf r_{t+1}^\top \mathbf r_{t+1} \quad (\text{by } \eqref{key})\\
		&&= || \mathbf u - \mathbf u_{t+\frac 12}||_{\mathbf A}^2 - c^{-1}_s\rho^{-1}_{\mathbf A} (\mathbf u - \mathbf u_{t+\frac 12})^\top \mathbf A^2(\mathbf u - \mathbf u_{t+\frac 12})\\
		&&= || \mathbf u - \mathbf u_{t+\frac 12}||_{\mathbf A}^2 - c^{-1}_s\rho^{-1}_{\mathbf A} ||(\mathbf u - \mathbf u_{t+\frac 12})||_{\mathbf A^2}^2
	\end{eqnarray*}
	Write $\mathbf w_{t} =\mathbf u -\mathbf u_{t+\frac 12} = (\mathbf I -\mathbf C)(\mathbf u -\mathbf u_t)$, with \eqref{Prolongation1} and \eqref{Prolongation2}, we have 
	\begin{eqnarray*}
		&||\mathbf u - \mathbf u_{t+1}||^2_{\mathbf A} \leq || \mathbf w||_{\mathbf A}^2 - c^{-1}_s\rho^{-1}_{\mathbf A} ||\mathbf w||_{\mathbf A^2}\leq||\mathbf u - \mathbf u_{t}||^2_{\mathbf A} -c^{-1}_sc^{-1}_a||\mathbf u - \mathbf u_{t}||^2_{\mathbf A} = \delta ||\mathbf u - \mathbf u_{t}||^2_{\mathbf A}.
			\end{eqnarray*}
	Thus, \eqref{convergence:step} is derived.
	
	Now, we only need to verify that $\mathbf G = \mathbf G(\mathbf r)$ given in \textbf{Algorithm \ref{alg:Bsc}} stratifies \textbf{Assumption \ref{assumption:convergence}}.
	Indeed, we have $$\mathbf B_{\mathbf r} = \mathbf G (\mathbf G^\top \mathbf A \mathbf G)^{-1} \mathbf G^\top.$$
	For the first assumption, 
	$$ \mathbf A\text{ is SPD}\Rightarrow \mathbf G^\top \mathbf A \mathbf G \text{ is SPD} \Rightarrow (\mathbf G^\top \mathbf A \mathbf G)^{-1} \text{ is SPD} \Rightarrow \mathbf B_{\mathbf r} =\mathbf G (\mathbf G^\top \mathbf A \mathbf G)^{-1} \mathbf G^\top \text{ is SSPD}.$$
	And  $$\mathbf B_{\mathbf r}  \mathbf A \mathbf B_{\mathbf r}=\mathbf G (\mathbf G^\top \mathbf A \mathbf G)^{-1} \mathbf G^\top\mathbf A\mathbf G (\mathbf G^\top \mathbf A \mathbf G)^{-1} \mathbf G^\top = \mathbf G (\mathbf G^\top \mathbf A \mathbf G)^{-1} \mathbf G^\top =\mathbf B_{\mathbf r}.$$
	
	For the second assumption, without loss of generality, we assume $||\mathbf r||=1$, then we have $||\mathbf r||^2_\mathbf{A}\leq \rho_{\mathbf{A}}||\mathbf r||^2 = \rho_{\mathbf{A}}$.
    
    Write $\mathbf G = [\mathbf g_1,...,\mathbf g_L]$, and $\mathbf g_i, i = 1,2,...,L$ satisfy $\mathbf g_i^\top\mathbf A \mathbf g_j = \delta_{ij}$.
    Choose $\mathbf g_1 = \dfrac{\mathbf r}{||\mathbf r||_\mathbf{A}}$. Then, we obtain $$\mathbf r^\top\mathbf{B}_{\mathbf r}\mathbf r\geq (\mathbf r^\top \mathbf g_1)^2 =\dfrac{1}{||\mathbf r||^2_\mathbf{A}}\geq \frac{1}{\rho_{\mathbf A}}.$$ This means that \eqref{key} is satisfied if $\mathbf r\in \text{range}(\mathbf G)$, which is obvious from the design of the Meta-NN for $B_{\text{sc}}$.
	\end{proof}
	
\section{Numerical Experiments}
In this section, we evaluate the performance of Meta-MgNet through a series of numerical experiments. In \textbf{Section 5.1} and \textbf{5.2}, we apply Meta-MgNet to 2D and 3D anisotropic diffusion equations on domain $\Omega=[0,1]^d, d=2,3$. In \textbf{Section 5.3}, we demonstrate why it is challenging to train a PDE-MgNet that generalizes well for all $\eta$. In \textbf{Section 5.4}, we include more practical examples. 

In the experiments, we compare Meta-MgNet with PDE-MgNet and MG method. Both Meta-MgNet and PDE-MgNet are trained on a data set created by a set of $\eta$, which will be later described in detail. Furthermore, we also train PDE-MgNet for each individual $\eta$ and denote the trained model as PDE-MgNet-$\eta$. During testing, we only apply PDE-MgNet-$\eta$ on the test data generated by the same $\eta$ as during training. Thus, PDE-MgNet-$\eta$ demonstrates the best accuracy of PDE-MgNet may achieve while it is impractical since it requires retraining of PDE-MgNet for each individual $\eta$. For the classical MG method, we choose the Krylov subspace smoother, GS smoother, line-GS smoother, and damped Jacobi smoother. Since the GS and line-GS smoother are challenging to implement efficiently with GPU, we use Matlab on CPU instead. All other algorithms are implemented in PyTorch on GPU.

\subsection{2D Anisotropic Diffusion Equations}
We consider the anisotropic diffusion equation
\begin{equation}
	\left\{
	\begin{split}
		-\nabla\cdot(C \nabla u) &= f, &\text{ in } \Omega,\\
		u &= 0,  &\text{ on } \partial \Omega,
	\end{split}
	\right.
\end{equation}
where $C = C(\epsilon, \theta)= \begin{pmatrix}
	\cos \theta& -\sin\theta\\\sin\theta&\cos \theta
\end{pmatrix}
\begin{pmatrix}
	1& 0\\0&\epsilon
\end{pmatrix}
\begin{pmatrix}
	\cos \theta& \sin\theta\\-\sin\theta&\cos \theta
\end{pmatrix}$ is a $2\times 2$ matrix, $\epsilon<1, \theta\in[0,\pi].$
\subsubsection{Settings: training, testing and hyperparameter selection}
\begin{enumerate}[(1)]
	\item For the training set, we randomly sample $M_{\text p}= 20$ different sets of parameters $\eta$ from an interval $I_\text {train}$. The interval $I_\text {train}$ is different for different PDEs. Thus, we will specify it later for each experiment. For each given $\eta$, we randomly sample $M_{\text{m-train}} = 100$ right-hand-side function, and each entry of $\mathsf f$ is sampled from the Gaussian distribution $N(0,1)$. We use ADAM method and the unsupervised learning loss \eqref{Loss2} to train PDE-MgNet for $50$ epochs and Meta-MgNet for $20$ epochs. Since no matter if Meta-MgNet is fine-tuned, the result is almost the same, we skip the fine-tuning stage of Meta-MgNet. (See \textbf{Appendix I} for an ablation study on fine-tuning.) The learning rate for ADAM is $0.02$ and the batch size is $64$.
	\item For the test set, we choose some specific $\eta$ from a set $I_{\text {test}}$, and for each $\eta$ we randomly sample  $M_{\text{m-test}} = 10$ right-hand-side function. The stopping criterion for all compared algorithms is chosen as $$\dfrac {||\mathbf f-\mathbf A_{\eta}\mathbf u_t||_2}{||\mathbf f||_2}<10^{-6}.$$ Number of iterations and wall time are used as metrics to compare the performance of different algorithms. Furthermore, we use "mean$\pm$std" to show the average and standard deviation of the number of iteration and wall time over the $M_{\text{m-test}}$ samples. 
	\item We use $N\times N$ rectangular mesh and  $Q_1$-element to discretize the PDEs. We select $N=256$ and the number of layers $J=5$. The prolongations $\mathcal P$ and restrictions $\mathcal R$ are given by the traditional 9-point stencil, i.e. the kernel of $\mathcal P$ and $\mathcal R$ is
		$$\mathsf P = \mathsf R =\begin{pmatrix}
	\frac 1 4 & \frac 1 2& \frac 14\\
	\frac 1 2 & 1 & \frac 12\\
	\frac 1 4 & \frac 1 2& \frac 14\\
	\end{pmatrix}.$$
	The $\mathcal A^\ell$ on coarse grid is also given by the geometric multigrid method, and it is easy to verify that each $\mathsf A^\ell$ is equal for $Q_1$-element. We chose $\backslash$-Cycle structure for all algorithms with $\nu_1 = 2, \nu_2 = \cdots = \nu_J = 1$. (We tried several different settings of $\nu_l$ and found that $\nu_1 = 2, \nu_2 = \cdots = \nu_J = 1$ is the best option. See \textbf{Appendix II} for the corresponding ablation study.)

	\item For PDE-MgNet, the kernel size of smoother at each layer is $7\times7$. For Meta-MgNet, we choose SC method $B_{\text {sc}}$ in \textbf{Algorithm \ref{alg:Bsc}} and use the Meta-NN in \eqref{Meta-NN}. The kernel size of the output is $7\times 7$ and the increase channel number of Meta-NN is $l=3$. Note that a comparison between $B_{\text {d}}$ and $B_{\text {sc}}$ is given in \textbf{Appendix III}, it shows that $B_{\text {sc}}$ is the better option. Thus, we only present the numerical result of $B_{\text {sc}}$ in the main body of this paper. 

\end{enumerate}

We compare number of iterations and wall time of Meta-MgNet, PDE-MgNet and MG method. We consider two different generalization scenarios that will be called ``in-distribution generalization" and ``out-of-distribution (OoD) transfer". For in-distribution generalization, we have $I_{\text {test}} \subset I_{\text {train}}$, while for OoD transfer, the parameter $\eta$ of the test data is entirely outside the interval of the training data, i.e. $I_{\text {test}} \subset I_\text {train}^c$. 

\subsubsection{In-distribution generalization}

In this group of experiments, the training set of PDE-MgNet and Meta-MgNet is generated by fixing $\theta = 0$ in \textbf{Table \ref{table:AnisotropicPoisson}} and $\theta = 0.1\pi$ in \textbf{Table \ref{table:AnisotropicPoisson1.1}} and randomly sampling $\epsilon$ with distribution $\lg \frac 1\epsilon \sim U[0,5]$. The \textbf{Table \ref{table:AnisotropicPoisson}}, \textbf{Table \ref{table:AnisotropicPoisson1.1}} and \textbf{Figure \ref{fig:AnisotropicPoisson}} show Meta-MgNet has overall better performance than PDE-MgNet and MG methods, while the advantage is significant when $\epsilon$ is small. It is worth mentioning that the line-GS smoother can only be applied to several specific $\theta$, such as $0, \dfrac{\pi}{4}, \dfrac{\pi}{2}$, and $\epsilon$ should be small enough, thus line-GS smoother is limited in practical applications. 

\begin{table}
\footnotesize
	\begin{tabular}{|c|ccccccc|}\hline
		\#iterations&Meta-MgNet  	& PDE-MgNet 	 & PDE-MgNet-$\eta$ &MG(Krylov)        &MG(GS)&MG(line-GS)          &MG(Jacobi)  \\\hline
		$\epsilon=1$		&$  4.0\pm 0.00$&-          	 &$   7.0\pm 0.00$  &$4.0\pm0.00$     &$  10.0\pm 0.00$&-&$15.0\pm0.00$  \\
		$\epsilon=10^{-1}$	&$  7.5\pm 0.50$&  $19.2\pm0.40$ &$  21.2\pm 0.60$  & $7.9\pm0.30$    &$  33.7\pm 0.48$&-&$90.2\pm0.98$ \\
		$\epsilon=10^{-2}$	&$ 35.1\pm 1.04$& $178.9\pm2.74$ &$ 149.7\pm 3.44$  &$52.5\pm0.81$    &$ 253.6\pm 4.19$&$553.6\pm27.56$&$752.8\pm12.23$\\
		$\epsilon = 10^{-3}$&$171.6\pm 6.34$&$1.2\text{e}3\pm12.85$&$ 910.9\pm15.64$  &$345.9\pm3.88$   &$1.9\text{e}3\pm25.56$&$62.3\pm1.76$&$5.6\text{e}3\pm119.42$  \\
		$\epsilon = 10^{-4}$&$375.2\pm 5.88$&              - &$3.1\text{e}3\pm35.70$  &$2.2\text{e}3\pm27.94$ &-              &$11.0\pm0.00$ &-\\
		$\epsilon = 10^{-5}$&$797.8\pm12.76$&-               &$9.9\text{e}3\pm40.81$  &$7.6\text{e}3\pm81.96$ &-               &$11.0\pm0.00$&-\\
		\hline
		wall time 			&  			    &                &                  &                  &                & 	\\\hline
		$\epsilon = 1$		&$0.03\pm0.00$  &-               &$\mathbf{0.02\pm0.00}$     &$\mathbf{0.02\pm0.00}$     &$ 0.14\pm0.01$&-  &$0.04\pm0.00$\\
		$\epsilon = 10^{-1}$&$0.05\pm0.00$  &$0.05\pm0.00$   &$0.06\pm0.00$     &$\mathbf{0.04\pm0.00}$     &$ 0.48\pm0.02$ &- &$0.23\pm0.00$ \\
		$\epsilon = 10^{-2}$&$\mathbf{0.22\pm0.01}$  &$0.44\pm0.01$   &$0.37\pm0.01$     &$0.25\pm0.00$     &$ 3.47\pm0.15$ &$12.6\pm0.86$ &$1.85\pm0.03$   \\
		$\epsilon = 10^{-3}$&$\mathbf{1.06\pm0.04}$  &$3.04\pm0.03$   &$2.28\pm0.05$     &$1.64\pm0.02$     &$27.33\pm0.68$ &$1.40\pm0.04$ &$13.95\pm0.29$\\
		$\epsilon = 10^{-4}$&$2.31\pm0.03$  &-               &$7.69\pm0.13$     &$10.56\pm0.14$    &-             &$\mathbf{0.27\pm0.01} $  &-\\
		$\epsilon = 10^{-5}$&$4.91\pm0.08$  &-               &$24.49\pm0.14$    &$35.67\pm0.40$    &-              &$\mathbf{0.27\pm0.02} $ &-\\
		\hline
	\end{tabular}
	\caption{The number of iterations (when the stopping criteria is met) and the wall time of Meta-MgNet, PDE-MgNet and MG method.  "-" means the algorithm does not converge within $10^4$ iterations.}\label{table:AnisotropicPoisson}
\end{table}
\begin{figure}
    \centering
    \includegraphics{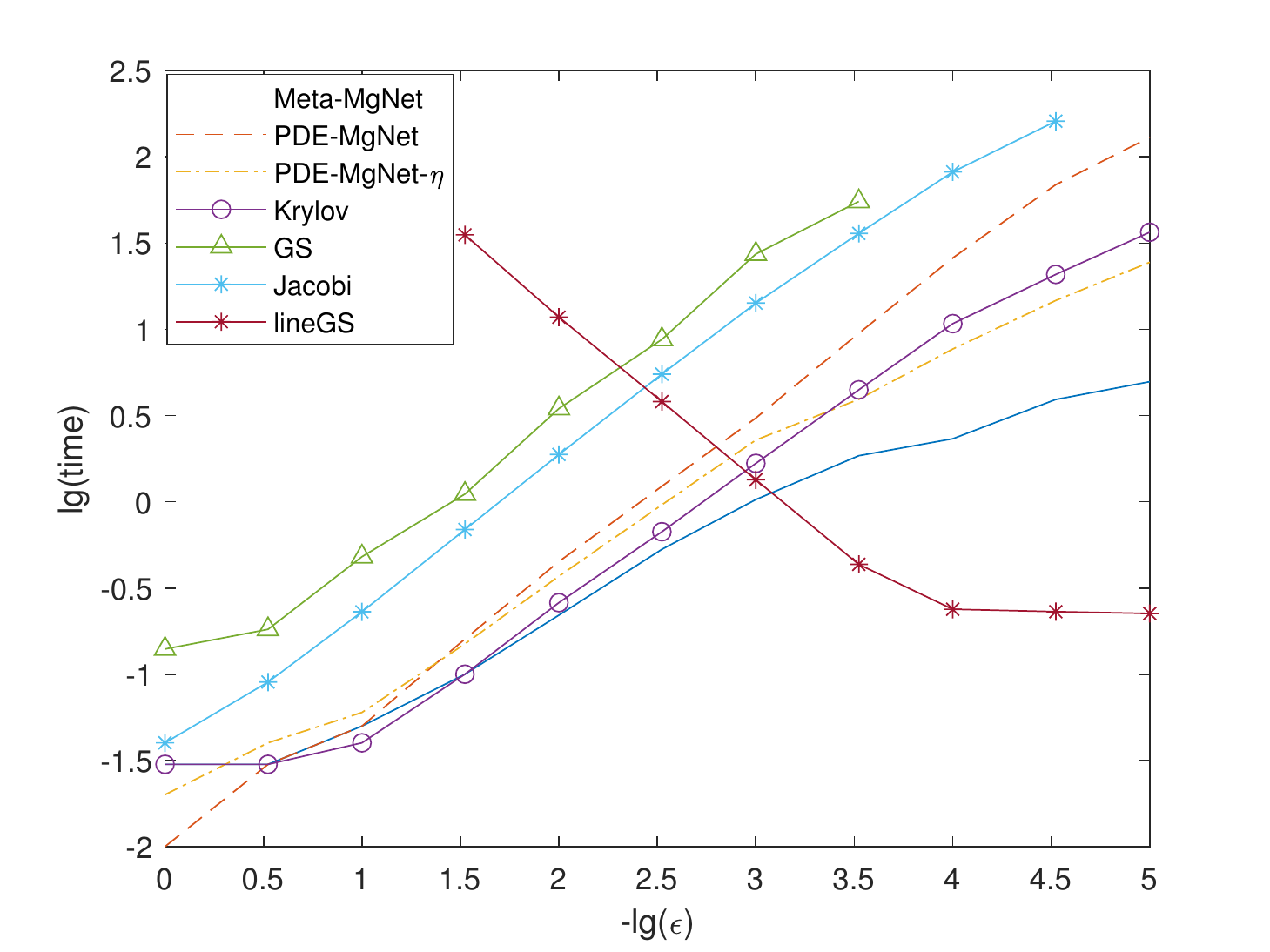}
    \caption{Wall time of Meta-MgNet, PDE-MgNet, and the MG methods while $\theta=0$ and $\epsilon$ varies.}
    \label{fig:AnisotropicPoisson}
\end{figure}

\begin{table}
\footnotesize
	\begin{tabular}{|c|ccccccc|}\hline
		\#iterations        &Meta-MgNet  	& PDE-MgNet 	   & PDE-MgNet-$\eta$       &MG(Krylov)             &MG(GS)                 &MG(line-GS)          &MG(Jacobi)  \\\hline
		$\epsilon=1$		&$4.0\pm0.00 $  &-         	   &$17.00\pm0.00$       &$4.0\pm0.00$           &$  10.0\pm 0.00$       &- &$15.0\pm0.00$  \\
		$\epsilon=10^{-1}$	&$5.4\pm0.49 $  &$136.7\pm1.95$   &$68.30\pm2.69$        &$8.0\pm0.30$           &$ 27.3\pm0.48$         &- &$64.70\pm0.64$  \\
		$\epsilon=10^{-2}$	&$28.5\pm0.50 $ &$1.0\text{e}3\pm25.39$    &$861.40\pm24.53$         &$45.4\pm0.49$          &$187.0\pm2.53$         &- &$476.30\pm4.78$ \\
		$\epsilon=10^{-3}$  &$94.2\pm1.33 $ &$3.8\text{e}3\pm183.22$   &$1.9\text{e}3\pm60.41$       &$142.8\pm1.89$         &$707.6\pm15.24$        &- &$1.8\text{e}3\pm44.47$  \\
		$\epsilon=10^{-4}$  &$129.4\pm2.42$ &$5.3\text{e}3\pm180.33$   &$3.8\text{e}3\pm94.79$   &$187.8\pm3.79$         &$990.2\pm19.03 $       &- &$2.5\text{e}3\pm62.01$\\
		$\epsilon=10^{-5}$  &$134.8\pm2.86$ &$5.6\text{e}3\pm168.51$   &$4.1\text{e}3\pm110.96$   &$195.9\pm2.43$         &$1.0\text{e}3\pm37.45$ &- &$2.6\text{e}3\pm79.24$     \\
		\hline
		wall time 			&  			    &                  &                        &                       &                       &                   &	\\\hline
		$\epsilon=1$		&$0.04\pm0.00 $ &-                 &$0.05\pm0.00$  &$\mathbf{0.03\pm0.00}$ &$ 0.14\pm0.01$         &- &$0.04\pm0.00$\\
		$\epsilon=10^{-1}$  &$\mathbf{0.05\pm0.00} $ &$0.36\pm0.01$     &$0.18\pm0.01$          &$\mathbf{0.05\pm0.00}$ &$0.29\pm0.02$          &- &$0.17\pm0.00$ \\
		$\epsilon=10^{-2}$  &$\mathbf{0.21\pm0.00} $ &$2.64\pm0.06$     &$2.23\pm0.06$            &$0.24\pm0.00$          &$ 2.05\pm0.08$         &- &$1.26\pm0.01$   \\
		$\epsilon=10^{-3}$  &$\mathbf{0.69\pm0.01} $ &$9.89\pm0.53$     &$4.90\pm0.17$            &$0.75\pm0.01$          & $7.78\pm0.21 $        &- &$4.79\pm0.11$\\
		$\epsilon=10^{-4}$  &$\mathbf{0.94\pm0.02} $ &$13.87\pm0.48$    &$9.90\pm0.24$            &$0.98\pm0.02$          &$10.76\pm0.25$         &- &$6.66\pm0.15$  \\
		$\epsilon=10^{-5}$  &$\mathbf{0.98\pm0.02}$  &$14.56\pm0.44$    &$10.69\pm0.30$         &$1.02\pm0.01$          &$11.36\pm0.51$         &- &$7.03\pm0.21$\\
		\hline
	\end{tabular}
	\caption{The number of iterations (when the stopping criteria is met) and the wall time of Meta-MgNet, PDE-MgNet and MG method.  "-" means the algorithm does not converge within $10^4$ iterations.}\label{table:AnisotropicPoisson1.1}
\end{table}

\subsubsection{Out-of-distribution (OoD) transfer}

The first group of experiments are with the fixed $\theta=0$. The training set is generated with $\lg \frac 1\epsilon \sim U[2,3]$ and the test set is generated with $\epsilon = 1, 10^{-1}, 10^{-4}, 10^{-5}$ (i.e. a test for OoD transfer with respect to $\epsilon$). For the second group of experiments, the training set is generated with $\theta \sim U[0.125\pi, 0.375\pi]$ and $\lg \frac 1\epsilon \sim U[0,5]$, while the test set is generated with $\theta = 0.05\pi, 0.12\pi, 0.4\pi$ and $0.5 \pi$ (i.e. a test for OoD transfer with respect to $\theta$).

\textbf{Table \ref{table:AnisotropicPoisson1}} shows that Meta-MgNet has superior ability of OoD transfer with respect to $\epsilon$ in comparison with PDE-MgNet, while \textbf{Table \ref{table:AnisotropicPoisson2}} shows that Meta-MgNet is noticeably superior in OoD transfer with respect to $\theta$ in comparison with PDE-MgNet. Note that classical methods are not learning-based, and hence they do not have the issue of OoD transfer. Thus, the results of MG methods in Table \ref{table:AnisotropicPoisson1} are copied from Table \ref{table:AnisotropicPoisson}.

\begin{table}
\footnotesize
	\begin{tabular}{|c|ccccccc|}\hline
		\#iterations&Meta-MgNet  	& PDE-MgNet 	 & PDE-MgNet-$\eta$ &MG(Krylov)        &MG(GS)      &MG(line-GS)    &MG(Jacobi)  \\\hline
		$\epsilon=1$		&$7.0\pm0.00$    &-          	 &$   7.0\pm 0.00$  &$4.0\pm0.00$     &$  10.0\pm 0.00$&-&$15.0\pm0.00$\\
		$\epsilon=10^{-1}$	&$10.0\pm0.00$   &$23.0\pm0.00$  &$  21.2\pm 0.60$  & $7.9\pm0.30$    &$  33.7\pm 0.48$&-&$90.2\pm0.98$\\
		$\epsilon = 10^{-4}$&$340.7\pm3.52$  &$5.8\text{e}3\pm121.90$  &$3.1\text{e}3\pm35.70$  &$2.2\text{e}3\pm27.94$ &-&$11.0\pm0.00$               &-\\
		$\epsilon = 10^{-5}$&$817.2\pm97.97$ &-               &$9.9\text{e}3\pm40.81$  &$7.6\text{e}3\pm81.96$ &-&$11.0\pm0.00$               &-\\
		\hline
		wall time 			&  			    &                &                  &                  &                && 	\\\hline
		$\epsilon = 1$		&$0.05\pm0.00$  &-               &$\mathbf{0.02\pm0.00}$     &$\mathbf{0.02\pm0.00}$     &$ 0.14\pm0.01$ &- &$0.04\pm0.00$\\
		$\epsilon = 10^{-1}$&$0.07\pm0.00$  &$0.06\pm0.00$   &$0.06\pm0.00$     &$\mathbf{0.04\pm0.00}$     &$ 0.48\pm0.02$&-  &$0.23\pm0.00$\\
		$\epsilon = 10^{-4}$&$2.08\pm0.02$  &$14.38\pm0.32$  &$7.69\pm0.13$     &$10.56\pm0.14$    &-   &$\mathbf{0.27\pm0.01}$          &-\\
		$\epsilon = 10^{-5}$&$4.99\pm0.59$  &-               &$24.49\pm0.14$    &$35.67\pm0.40$    &-   &$\mathbf{0.27\pm0.02}$          &-\\
		\hline
	\end{tabular}
	\caption{The mean and std of the number of iterations (when the stopping criteria is met) and the wall time of Meta-MgNet, PDE-MgNet and MG method on the testing set.  "-" means the algorithm does not converge within $10^4$ iterations.}\label{table:AnisotropicPoisson1}
\end{table}
\begin{table}
	\begin{tabular}{|c|cccc|}\hline
		\#iterations&Meta-MgNet,$\theta=0.05\pi$&PDE-MgNet,$\theta=0.05\pi$ &Meta-MgNet,$\theta=0.12\pi$&PDE-MgNet,$\theta =0.12 \pi$\\\hline
		$\epsilon=1$		&$3.0\pm0.00$   &-              &$3.0\pm0.00$  &-\\
		$\epsilon=10^{-1}$	&$10.6\pm0.49$  &-			     &$10.1\pm0.30$ &$132.6\pm3.10$\\
		$\epsilon=10^{-2}$	&$71.5\pm1.57$  &-			     &$72.0\pm1.61$ &$566.3\pm11.36$\\
		$\epsilon = 10^{-3}$&$322.4\pm7.03$ &-				 &$233.2\pm7.49$&$2.1\text{e}3\pm55.09$\\
		$\epsilon = 10^{-4}$&$526.7\pm14.64$&-				 &$306.0\pm7.78$&$2.8\text{e}3\pm150.40$\\
		$\epsilon = 10^{-5}$&$557.4\pm14.72$&-				 &$314.0\pm4.86$&$2.8\text{e}3\pm33.31$\\
		\hline
		wall time 				& 			  &  			& 			   & 	\\\hline
		$\epsilon = 1$		&$0.03\pm0.00$&-            &$0.03\pm0.00$ &-\\
		$\epsilon = 10^{-1}$&$0.07\pm0.00$&-&$0.07\pm0.00$&$0.35\pm0.01$\\
		$\epsilon = 10^{-2}$&$0.46\pm0.01$&-&$0.46\pm0.01$ &$1.49\pm0.04$\\
		$\epsilon = 10^{-3}$&$2.05\pm0.04$&-& $1.48\pm0.05$&$5.42\pm0.18$\\
		$\epsilon = 10^{-4}$&$3.34\pm0.09$&-&$1.95\pm0.05$&$7.49\pm0.45$\\
		$\epsilon = 10^{-5}$&$3.54\pm0.09$&-&$1.99\pm0.03$&$7.49\pm0.15$\\
		\hline
		\#iterations&Meta-MgNet,$\theta=0.4\pi$&PDE-MgNet,$\theta=0.4\pi$ &Meta-MgNet,$\theta=0.5\pi$&PDE-MgNet,$\theta =0.5 \pi$\\\hline
		$\epsilon=1$		&$3.0\pm0.00$ &-              &$3.0\pm0.0$ &-\\
		$\epsilon=10^{-1}$	&$9.0\pm0.00$&$51.7\pm1.27$&$8.9\pm0.30$&$49.3\pm1.42$\\
		$\epsilon=10^{-2}$	&$65.2\pm1.54$&$434.5\pm7.75$&$53.5\pm1.12$&$428.8\pm12.83$\\
		$\epsilon = 10^{-3}$&$240.3\pm3.41$&$1.7\text{e}3\pm50.92$&$262.9\pm5.96$&$2.8\text{e}3\pm16.81$\\
		$\epsilon = 10^{-4}$&$327.5\pm5.33$&$2.4\text{e}3\pm67.84$&$526.4\pm25.51$&-\\
		$\epsilon = 10^{-5}$&$339.5\pm6.92$&$2.5\text{e}3\pm85.99$&$908.7\pm27.43$&-\\
		\hline
		wall time 				& 			  &  			& 			   & 	\\\hline
		$\epsilon = 1$		&$0.03\pm0.00$&-            &$0.03\pm0.00$ &-\\
		$\epsilon = 10^{-1}$&$0.06\pm0.00$&$0.14\pm0.00$& $0.06\pm0.00$& $0.13\pm0.01$\\
		$\epsilon = 10^{-2}$&$0.42\pm0.01$&$1.15\pm0.03$& $0.35\pm0.01$&$1.13\pm0.03$\\
		$\epsilon = 10^{-3}$&$1.53\pm0.02$&$4.48\pm0.12$&$1.67\pm0.04$&$7.34\pm0.11$\\
		$\epsilon = 10^{-4}$&$2.09\pm0.03$&$6.38\pm0.22$& $3.35\pm0.16$&-\\
		$\epsilon = 10^{-5}$&$2.15\pm0.04$&$6.61\pm0.31$&$5.76\pm0.18$&-\\
		\hline
	\end{tabular}
	\caption{The mean and std of the number of iterations (when the stopping criteria is met) and the wall time of Meta-MgNet, PDE-MgNet and MG method on the testing set.  "-" means the algorithm does not converge within $10^4$ iterations.}\label{table:AnisotropicPoisson2}
\end{table}

\subsubsection{Further discussions}

It is also worth noting that, for both the scenarios of in-distribution generalization and OoD transfer, Meta-MgNet even outperforms PDE-MgNet-$\eta$ which uses the exact same $\eta$ for both training and testing. This shows the benefit of treating the problem of solving parameterized PDEs as a multi-task learning problem. From what it seems, the hypernetwork, i.e. Meta-NN, is able to extract certain common structure hidden within the tasks which helps with solving each individual task.

The training time of Meta-MgNet is around 45 minutes, while it is around 10 minutes for PDE-MgNet-$\eta$ for each $\eta$. Although training Meta-MgNet takes more times than PDE-MgNet-$\eta$ for each $\eta$. In practical application, especially multi-query scenarios, the utility of PDE-MgNet-$\eta$ can be significantly reduced due to the constant retraining. Therefore, Meta-MgNet has a clear overall advantage. 

\subsection{3D Anisotropic Diffusion Equations}

Consider the following 3D anisotropic diffusion equation
\begin{equation*}
\left\{
\begin{split}
-\nabla\cdot(C \nabla u) &= f, &\text{ in } \Omega,\\
u &= 0,  &\text{ on } \partial \Omega,
\end{split}
\right.\qquad\mbox{and}\qquad 
C = 
\begin{pmatrix}
\epsilon_0& &\\&\epsilon_1&\\&&\epsilon_2
\end{pmatrix},
\end{equation*}
with $\Omega = [0,1]^3$ and $\epsilon_0, \epsilon_1, \epsilon_2>0$. Without loss of generality, we set $\epsilon_0 = 1$.

\subsubsection{Settings: training, testing and hyperparameter selection}
\begin{enumerate}[(1)]
	\item The settings of training and testing are the same as the 2D case.
	\item We use $N\times N\times N$ rectangular mesh and use $Q_1$-element to discretize the PDE. Let $N=64$ and the number of layers $J=4$. The prolongations $\mathcal P$ and restrictions $\mathcal R$ are given by the traditional 9-point stencil. The $\mathcal A^\ell$ on the coarse grid is given by the geometry MG method. It is easy to verify that each $\mathsf A^\ell$ is equal for $Q_1$-element. We chose $\backslash$-Cycle structure of all algorithms with $\nu_1 =2 ,\nu_2 = \cdots = \nu_J = 1$. 
	
	\item For MgNet, the size of the kernel for the convolution smoother is $3\times3\times3$. For Meta-MgNet, we choose the SC method $B_{\text {sc}}$ in \textbf{Algorithm \ref{alg:Bsc}} and use the Meta-NN in \eqref{Meta-NN}. We simply set the convolution smoother to be one layer CNN without activation, and the size of the kernel for the output is $7\times 7\times 7$. We set the number of channels of Meta-NN to $l=3$.
\end{enumerate}

\subsubsection{In-distribution generalization}

In this group of experiments, the training data set is generated by sampling $\epsilon_1$ and $\epsilon_2$ from distribution $\lg {\dfrac 1{\epsilon_1}} \sim U[0,5]$ and $\lg {\dfrac 1{\epsilon_2}} \sim U[0,5]$ respectively.  \textbf{Table \ref{table:AnisotropicPoisson3d}} shows that Meta-MgNet is more efficient than classic MG methods.
\begin{table}
	\begin{tabular}{|c|cccccc|}\hline
		\#iterations	&Meta-MgNet		   &PDE-MgNet	&PDE-MgNet-$\eta$ &MG(Krylov)&MG(GS)	     &MG(Jacobi)  \\\hline
		$(\epsilon_1,\epsilon_2)=(10^{-1},10^{-1})$&$5.0\pm0.00$&$11.0\pm0.00$          & $11.0\pm0.00$& $7.0\pm0.00$     & $53.0\pm1.05$   &-    \\
		$(\epsilon_1,\epsilon_2)=(10^{-1},10^{-2})$&$13.0\pm0.00$    &$91.4\pm1.20$            &$46.6\pm1.69$  &$38.3\pm1.00$  & $159.9\pm5.27$ &-  \\
		$(\epsilon_1,\epsilon_2)=(10^{-1},10^{-5})$&$156.2\pm3.57$   &$3.2\text{e}3\pm73.59$         &$475.0\pm8.00$ &$606.8\pm30.08$    &  -   &-   \\
		$(\epsilon_1,\epsilon_2)=(10^{-2},10^{-2})$&$10.3\pm0.46$&$116.4\pm0.49$ &$54.3\pm0.78$&$45.7\pm0.78$&  $178.4\pm2.54 $      &$291.52\pm9.14$    \\
		$(\epsilon_1,\epsilon_2)=(10^{-2},10^{-5})$&$73.4\pm0.80$   & $3.5\text{e}3\pm77.82$       &$771.5\pm9.39$&$631.70\pm16.64$ &   -       &-\\
		$(\epsilon_1,\epsilon_2)=(10^{-5},10^{-5})$&$111.6\pm0.92$&$8.3\text{e}3\pm65.18$      &$5.5\text{e}3\pm99.85$ &$1.8\text{e}3\pm20.23$&   -       &-\\
		\hline
		wall time 			 &&&  &&& 	\\\hline
		$(\epsilon_1,\epsilon_2)=(10^{-1},10^{-1})$&$0.13\pm0.00$&$0.09\pm0.00$         &$0.09\pm0.00$ &$\mathbf{0.07\pm0.00}$ &   $4.29\pm0.10$      -&-  \\
		$(\epsilon_1,\epsilon_2)=(10^{-1},10^{-2})$&$\mathbf{0.30\pm0.00}$  &$0.65\pm0.01$        &$0.34\pm0.02$& $0.32\pm0.01$& $14.2\pm0.64$      &- \\
		$(\epsilon_1,\epsilon_2)=(10^{-1},10^{-5})$&$3.40\pm0.08$ &$22.53\pm0.51$       &$\mathbf{3.34\pm0.06}$&$4.93\pm0.25$  & -       &-   \\
		$(\epsilon_1,\epsilon_2)=(10^{-2},10^{-2})$&$\mathbf{0.25\pm0.01}$ &$0.82\pm0.00$     &$0.39\pm0.01$&$0.38\pm0.01$    &    $14.67\pm0.61$        &$6.00\pm0.00$  \\
		$(\epsilon_1,\epsilon_2)=(10^{-2},10^{-5})$&$\mathbf{1.62\pm0.02}$&$24.73\pm0.55$      &$5.42\pm0.06$ &$5.12\pm0.13$   &    -        &-\\
		$(\epsilon_1,\epsilon_2)=(10^{-5},10^{-5})$&$\mathbf{2.45\pm0.02}$ & $58.87\pm0.47$      &$38.93\pm0.71$&$14.90\pm0.16$   &   -       &-\\
		\hline
	\end{tabular}
	\caption{The mean and std of the number of iterations (when the stopping criteria is met) and the wall time of Meta-MgNet, PDE-MgNet and MG method on the testing set.  "-" means the algorithm does not converge within $10^4$ iterations.}\label{table:AnisotropicPoisson3d}
\end{table}

\subsubsection{Out-of-distribution (OoD) transfer}

In this group of experiments, the training data set is generated by sampling $\epsilon_1$ and $\epsilon_2$ from distribution $\lg {\dfrac 1{\epsilon_1}} \sim U[3,4]$ and $\lg {\dfrac 1{\epsilon_2}} \sim U[3,4]$ respectively. \textbf{Table \ref{table:AnisotropicPoisson3de}} shows that Meta-MgNet has an overall best performance, and PDE-MgNet has trouble in OoD transfer with respect to $\epsilon_1$ and $\epsilon_2$.

\begin{table}
	\begin{tabular}{|c|cccccc|}\hline
		\#iterations	&Meta-MgNet		   &PDE-MgNet	&PDE-MgNet-$\eta$ &MG(Krylov)&MG(GS)	     &MG(Jacobi)  \\\hline
		$(\epsilon_1,\epsilon_2)=(10^{-1},10^{-1})$&$10.0\pm0.00$   & -         & $11.0\pm0.00$& $7.0\pm0.00$     & $53.0\pm1.05$   &-    \\
		$(\epsilon_1,\epsilon_2)=(10^{-1},10^{-2})$&$43.1\pm0.54$     &-            &$46.6\pm1.69$  &$38.3\pm1.00$  & $159.9\pm5.27$ &-  \\
		$(\epsilon_1,\epsilon_2)=(10^{-1},10^{-5})$&$755.4\pm55.88$   & -        &$475.0\pm8.00$ &$606.8\pm30.08$    & -    &-   \\
		$(\epsilon_1,\epsilon_2)=(10^{-2},10^{-2})$&$11.0\pm0.00$  &- &$54.3\pm0.78$&$45.7\pm0.78$&   $178.4\pm2.54 $       &$291.52\pm9.14$    \\
		$(\epsilon_1,\epsilon_2)=(10^{-2},10^{-5})$&$106.7\pm1.35$      &-       &$771.5\pm9.39$&$631.7\pm16.64$ &   -       &-\\
		$(\epsilon_1,\epsilon_2)=(10^{-5},10^{-5})$&$125.7\pm2.19$ &-     &$5.5\text{e}3\pm99.85$ &$1.8\text{e}3\pm20.23$&   -       &-\\
		\hline
		wall time 			 &&&  &&& 	\\\hline
		$(\epsilon_1,\epsilon_2)=(10^{-1},10^{-1})$&$0.24\pm0.01$& -       &$0.09\pm0.00$ &$\mathbf{0.07\pm0.00}$ & $4.29\pm0.10$        -&-  \\
		$(\epsilon_1,\epsilon_2)=(10^{-1},10^{-2})$&$0.95\pm0.01$   &-        &$0.34\pm0.02$&$\mathbf{ 0.32\pm0.01}$& $14.2\pm0.64$       &- \\
		$(\epsilon_1,\epsilon_2)=(10^{-1},10^{-5})$&$16.37\pm1.22$   & -       &$\mathbf{3.34\pm0.06}$&$4.93\pm0.25$  &  -      &-   \\
		$(\epsilon_1,\epsilon_2)=(10^{-2},10^{-2})$&$\mathbf{0.26\pm0.00}$ &-     &$0.39\pm0.01$&$0.38\pm0.01$    &  $14.67\pm0.61$          &$6.00\pm0.00$  \\
		$(\epsilon_1,\epsilon_2)=(10^{-2},10^{-5})$&$\mathbf{2.34\pm0.03}$&-      &$5.42\pm0.06$ &$5.12\pm0.13$   &     -       &-\\
		$(\epsilon_1,\epsilon_2)=(10^{-5},10^{-5})$&$\mathbf{2.76\pm0.05}$ &-     &$38.93\pm0.71$&$14.90\pm0.16$   &   -       &-\\
		\hline
	\end{tabular}
	\caption{The mean and std of the number of iterations (when the stopping criteria is met) and the wall time of Meta-MgNet, PDE-MgNet and MG method on the testing set.  "-" means the algorithm does not converge within $10^4$ iterations.}\label{table:AnisotropicPoisson3de}
\end{table}

\subsection{Why is it Challenging to Train a Convergent PDE-MgNet for all $\eta$?}

For both the 2D and 3D anisotropic diffusion equations, we found it challenging to train appropriate weights for PDE-MgNet that can generalize beyond its training setting. This is, in fact, the motivation of viewing solving parameterized PDEs as multi-task learning and introducing Meta-MgNet. In this subsection, we conduct a simple experiment to demonstrate this issue with PDE-MgNet.

Consider the 3D anisotropic diffusion equation. We choose two different distributions for the parameters $D_1$: $\lg_2 {\epsilon_1}\sim U[-2,-1],\lg_2{\epsilon_2} \sim U[1,2]$ and  $D_2$: $\lg_2 {\epsilon_1} \sim U[1,2],\lg_2{\epsilon_2} \sim U[-2,-1]$ to train two PDE-MgNet. We present the weights of the convolution smoothers at the finest level ($\ell=1$), namely $\mathsf K^1$ in \eqref{kernel1} and $\mathsf K^2$ in \eqref{kernel2}: 
\begin{equation}\label{kernel1}
\mathsf K^1=\left[\begin{bmatrix}
-0.0170& -0.0503& -0.0169\\
 0.0051& \color{red}{-0.2370}&  0.0051\\
-0.0171& -0.0504& -0.0170
\end{bmatrix}
\begin{bmatrix}
 0.0389& \color{red}{0.2431}&  0.0389\\
-0.0696&  1.1127& -0.0695\\
 0.0388& \color{red}{0.2431}&  0.0389
\end{bmatrix}
\begin{bmatrix}
-0.0171& -0.0503& -0.0170\\
 0.0050& \color{red}{-0.2370}&  0.0051\\
-0.0170& -0.0503& -0.0169
\end{bmatrix}\right],
\end{equation}
\begin{equation}\label{kernel2}
\mathsf K^2=\left[\begin{bmatrix}
-0.0127& -0.0331& -0.0126\\
 0.0339& \color{red}{0.2175}&  0.0340\\
-0.0126& -0.0332& -0.0127
\end{bmatrix}
\begin{bmatrix}
 0.0113& \color{red}{-0.2041}&  0.0114\\
-0.0766&  1.0647& -0.0765\\
 0.0113& \color{red}{-0.2041}&  0.0113
\end{bmatrix}
\begin{bmatrix}
-0.0126& -0.0332& -0.0126\\
 0.0340&  \color{red}{0.2175}&  0.0340\\
-0.0127& -0.0331& -0.0127
\end{bmatrix}\right].
\end{equation}
Noting the red numbers in $\mathsf K^1$ and $\mathsf K^2$, we can see that $\mathsf K^1_{-1,0, 0}, \mathsf K^1_{1,0,0}< 0$ while $\mathsf K^2_{-1,0,0}, \mathsf K^2_{1,0,0} > 0$; and $ \mathsf K^1_{0,-1, 0}, \mathsf K^1_{0,1,0} > 0$ while  $\mathsf K^2_{0,-1 ,0},\mathsf K^2_{0,1,0} < 0$. Furthermore, if we use $K^1$ to smooth the PDEs with parameters generated from $D_2$, the error will increase, which means the convolution smoother with kernel $\mathsf K^1$ is not fit for $D_2$. We have the same issue with $\mathbf K_2$ and $D_1$. This phenomenon indicates that different distribution of parameters of the parameterized PDE may lead to weights of PDE-MgNet of contradictory effects. This is not only for PDE-MgNet, but rather an issue often occurs for supervised learning models. In contrast, Meta-MgNet handles this issue gracefully by adjusting the weights according to $\eta$ in an adaptive fashion.

\subsection{Ossen Equations}

The right-hand-side function $\utilde f$ in previous numerical experiments is randomly generated from the normal distribution. In this section, we include practical numerical examples to show the performance of Meta-MgNet. We adopt Ossen equations as an example:
\begin{equation*}
	\left\{\begin{aligned}
		-\mu \Delta \utilde u+(\utilde a \cdot \nabla)\utilde u + \nabla p &=  \utilde f,  &\text{in } \Omega,\\
		-\text{div} \utilde u &= \utilde 0,  &\text{in } \Omega,\\
		\utilde u &= \utilde 0,    &\text{on }  \partial \Omega,
	\end{aligned}\right.
\end{equation*}
where $\utilde u = (u,v)^\top$, $\mu = \dfrac 1{Re}$, $Re$ is the Reynold number, and $\utilde a = (a_x, a_y)^\top$.
Without loss of generality, we let $\mu = 1$.
We choose $\utilde u = \begin{pmatrix}  -2xy^2(1-x)(1-2x)(1-y)^2\\ 2x^2y(1-y)(1-2y)(1-x)^2\end{pmatrix}$ and $p = x^2-y^2$ as the solution and calculate the analytic form of the corresponding right-hand-side function $\utilde f$. The training data set is construct from sampling $a_x \sim U[0, 200]$ and $a_y\sim U[0,200]$. We use the MAC scheme \citep{nicolaides1996analysis, chen2015convergence} to discretize Ossen equations and the mesh size is $512\times 512$. Except for the settings mentioned above, other settings are the same as \textbf{Section 5.1}. Numerical solutions and error maps for a few different $\utilde a$ are presented in \textbf{Figure \ref{fig:ossen0}-\ref{fig:ossen3}} with the number of iterations and the wall time of Meta-MgNet recorded in the captions.

\begin{figure}[pos=htbp]
	\centering
	\includegraphics[width=0.9\linewidth, height=7cm]{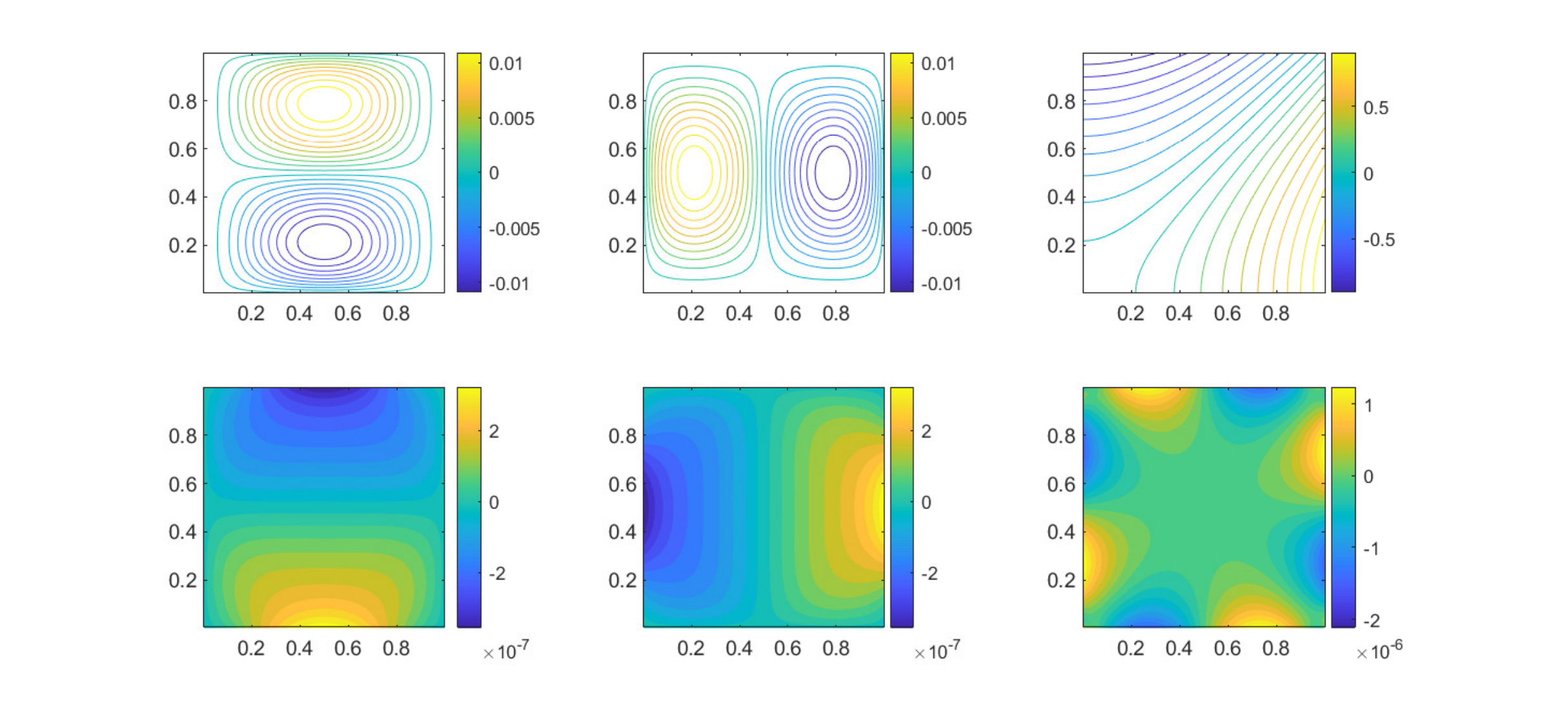}
	\caption{The three pictures on the top row are the numerical solutions and the three at the bottom row are the error of $\mathsf u$, $\mathsf v$ and $\mathsf p$ with $(a_x, a_y) = (0, 0)^\top$. The number of iterations is 367 and the wall time is 17.57s for Meta-MgNet.}
	\label{fig:ossen0}
\end{figure}
\begin{figure}[pos=htbp]
	\centering
	\includegraphics[width=0.9\linewidth, height=7cm]{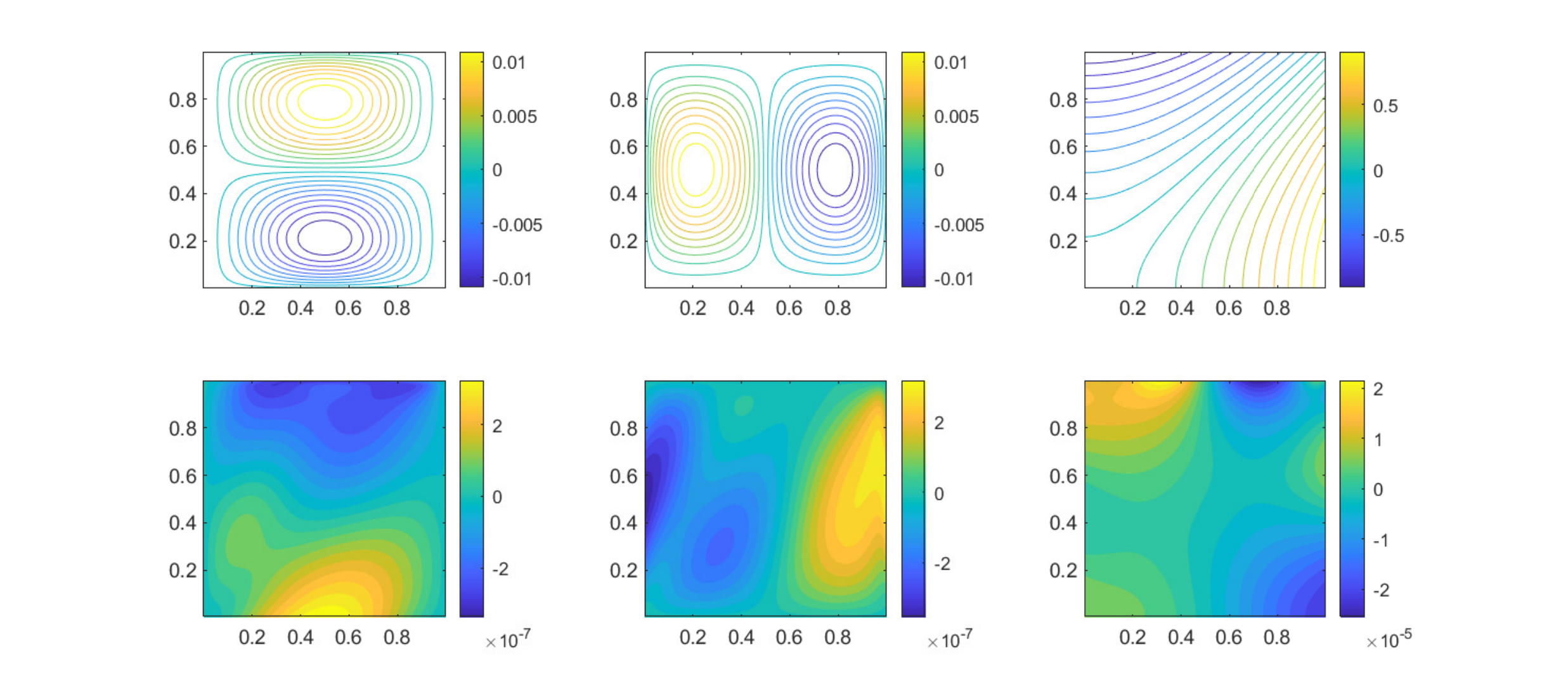}
	\caption{The three pictures on the top row are the numerical solutions and the three at the bottom row are the error of $\mathsf u$, $\mathsf v$ and $\mathsf p$ with $(a_x, a_y) = (50, 100)^\top$.  The number of iterations is 281 and the wall time is 13.6s for Meta-MgNet.}
	\label{fig:ossen1}
\end{figure}
\begin{figure}[pos=htbp]
	\centering
	\includegraphics[width=0.9\linewidth, height=7cm]{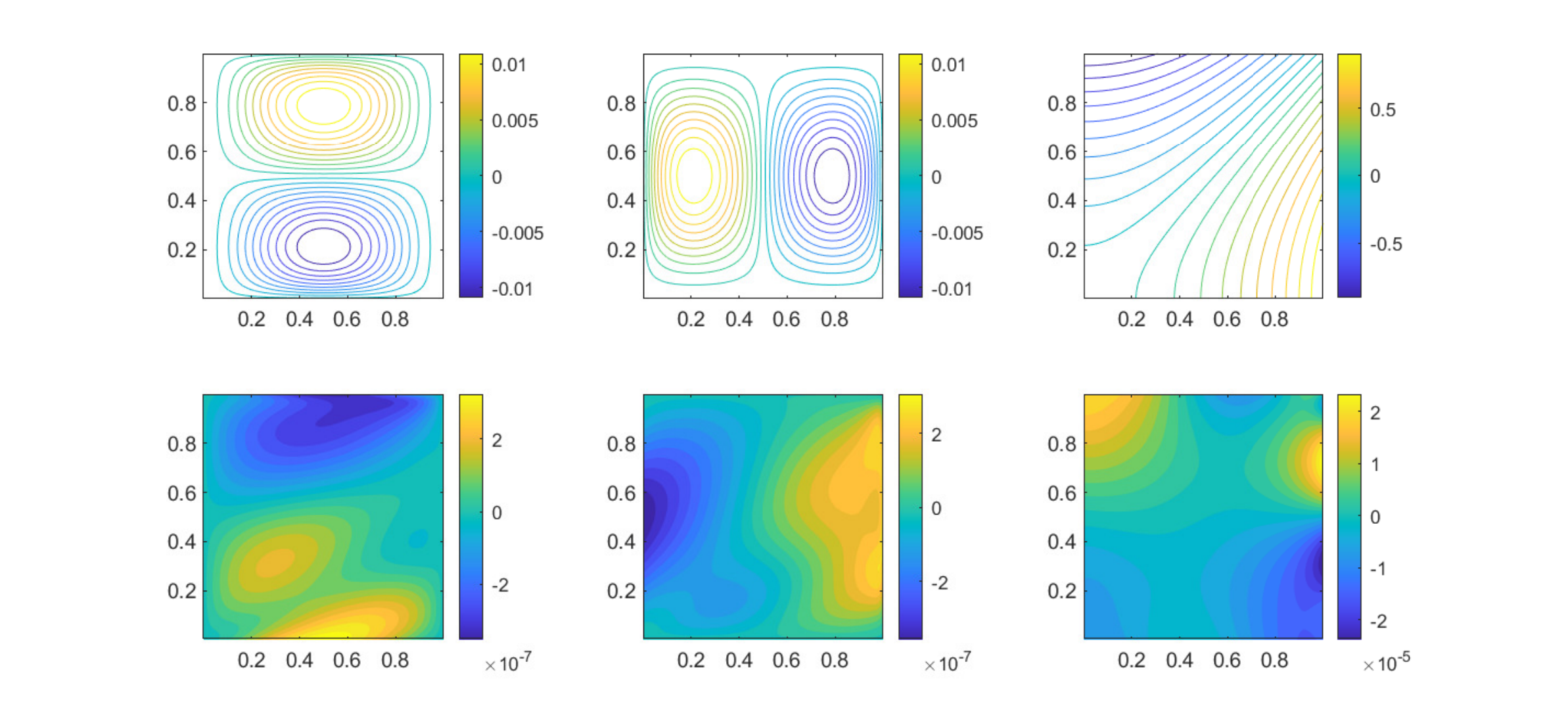}
	\caption{The three pictures on the top row are the numerical solutions and the three at the bottom row are the error of $\mathsf u$, $\mathsf v$ and $\mathsf p$ with $(a_x, a_y) = (100, 50)^\top$. The number of iterations is 275 and the wall time is 13.38s for Meta-MgNet.}
	\label{fig:ossen2}
\end{figure}
\begin{figure}[pos=htbp]
	\centering
	\includegraphics[width=0.9\linewidth, height=7cm]{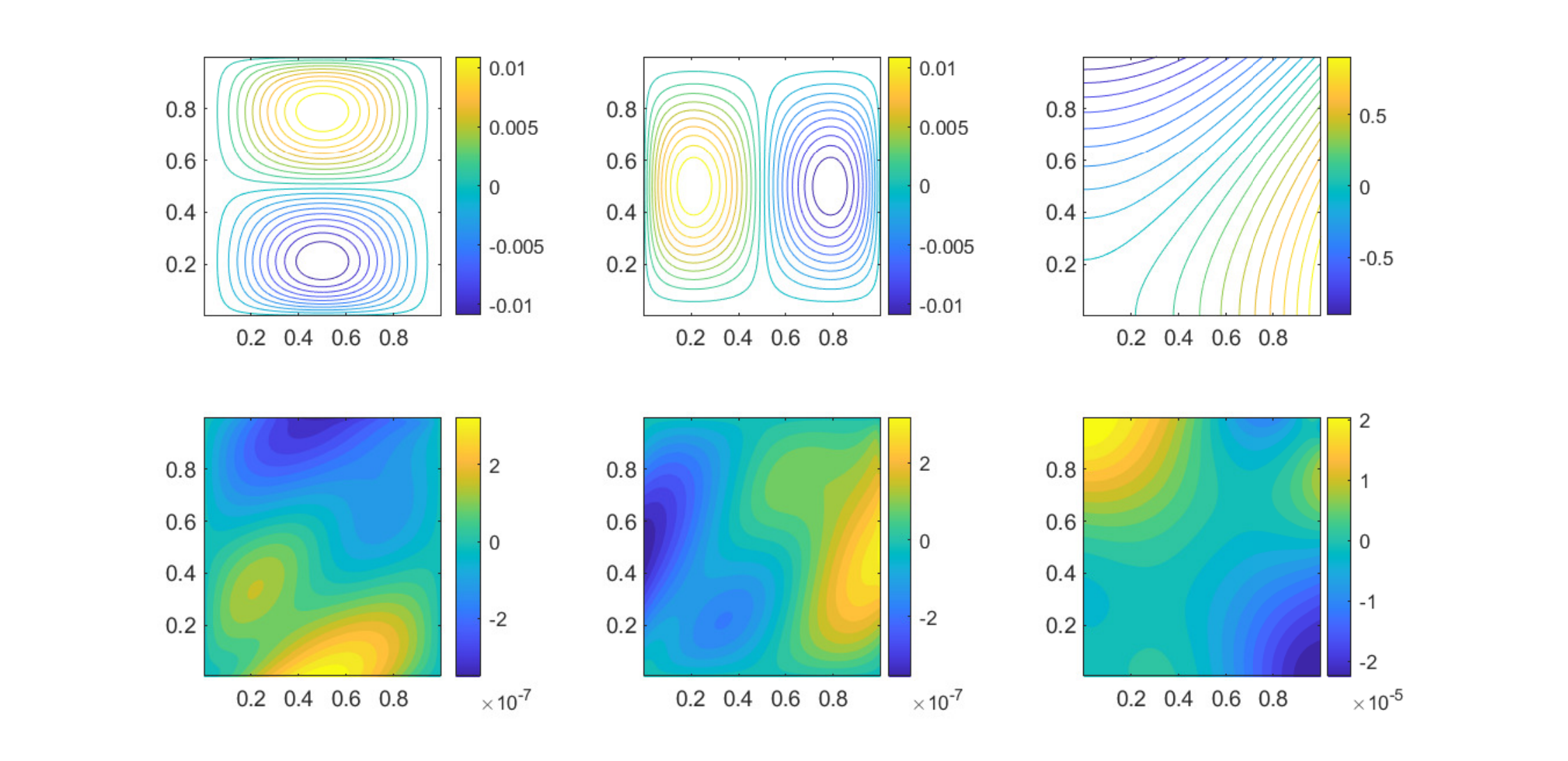}
	\caption{The three pictures on the top row are the numerical solutions and the three at the bottom row are the error of $\mathsf u$, $\mathsf v$ and $\mathsf p$ with $(a_x, a_y) = (100, 100)^\top$.  The number of iterations is 235 and the wall time is 11.52 for Meta-MgNet.}
	\label{fig:ossen3}
\end{figure}

\section{Conclusions and Future Work}
Solving parameterized PDEs is an essential and yet challenging task. In this paper, we provided a new perspective on the problem by viewing it as multi-task learning. With this, we proposed a new meta-learning based solver called Meta-MgNet by introducing a carefully designed hypernetwork (called Meta-NN) to the PDE-MgNet. Numerical experiments on 2D and 3D anisotropic diffusion equations showed that Meta-MgNet significantly outperforms the supervised learning-based PDE-MgNet and classical MG methods. Furthermore, Meta-MgNet manifested a clear advantage in training and generalization over PDE-MgNet, which demonstrated the feasibility of the proposed multi-task perspective and meta-learning approach to solving parameterized PDEs. 

This paper only discussed using meta-learning to improve smoothers because the prolongations and restrictions in classic MG methods are efficient enough to solve the three PDEs considered in this paper. As for some other PDEs such as Helmholtz equations, the convolutional prolongations and restrictions may not work well. Therefore, it is worth exploring a data-driven approach to improve prolongations and restrictions as well.  Furthermore, we only considered uniform mesh in this paper. We may consider generalizing MgNet or Meta-MgNet to nonuniform meshes, such as the triangular mesh, by exploiting tools from geometric deep learning, such as graph (convolutional) neural networks.

\section*{Acknowledgment}
Yuyan Chen was supported in part by the PSU-PKU Joint Center for Computational Mathematics and Applications, Bin Dong by the National Natural Science Foundation of China (grant No. 11831002), Beijing Natural Science Foundation (grant No. 180001), and Beijing Academy of Artificial Intelligence (BAAI), and Jinchao Xu by the Verne M. William Professorship Fund from the Pennsylvania State University and the National Science Foundation (grant No. DMS-1819157).

\bibliographystyle{cas-model2-names}
\bibliography{reference.bib}

\section{Appendix}
In the appendix, we add more numerical experiments to support some of our hyper-parameter choices. We use interpolation of 2D anisotropic diffusion equations as an example, and the setting of these experiments is the same as section 5.1.
\subsection{Appendix I}
The \textbf{ Table \ref{table:AppendixI}} shows the efficiency of fine-tuning for Meta-MgNet. We can find that the parameters inferred by meta smoother $B_\text{sc}$ are good enough so that the result is almost the same after fine-tuning. Therefore, we can skip the fine-tuning stage.
\begin{table}
	\begin{tabular}{|c|cc|}\hline
		\#iterations	&Meta-MgNet fine-tuning &Meta-MgNet      \\\hline
		$\epsilon=1$		    &$  3.0\pm0.00$         &$  4.0\pm0.00$  \\
		$\epsilon=10^{-1}$	    &$  7.0\pm0.00$         &$  7.5\pm0.50$  \\
		$\epsilon=10^{-2}$	    &$ 32.7\pm0.90$         &$ 35.1\pm1.04$  \\
		$\epsilon = 10^{-3}$    &$192.7\pm4.29$         &$171.6\pm6.34$  \\
		$\epsilon = 10^{-4}$    &$352.2\pm7.60$         &$375.2\pm5.88$  \\
		\hline
		wall time 				&                       &  	             \\\hline
		$\epsilon = 1$		    &$0.03\pm0.00$          &$0.03\pm0.00$   \\
		$\epsilon = 10^{-1}$    &$0.05\pm0.00$          &$0.05\pm0.00$   \\
		$\epsilon = 10^{-2}$    &$0.21\pm0.00$          &$0.22\pm0.01$   \\
		$\epsilon = 10^{-3}$    &$1.18\pm0.03$          &$1.06\pm0.04$   \\
		$\epsilon = 10^{-4}$    &$2.16\pm0.05$          &$2.31\pm0.03$   \\
		\hline
	\end{tabular}
	\caption{The mean and std of the number of iterations and the wall time of Meta-MgNet. No matter if Meta-MgNet is fine-tuned or not, the results are almost the same.}\label{table:AppendixI}
\end{table}
\subsection{Appendix II}
In section 5.1, we choose $(\nu_1,...,\nu_J)=(2,1,1,1,1)$. Now, we compare the result of several groups of $\nu_1,...,\nu_J$. Since it is easier to estimate error on coarse grid, to set $\nu_i > 1, i = 3,4,5$ is not necessary. Thus, we only test some pair of $\nu_1$ and $\nu_2$, and set $\nu_i = 1$ for $i = 3,4,5$.
The numerical experiments in \textbf{ Table \ref{table:AppendixII}} shows $(\nu_1,...,\nu_J)=(2,1,1,1,1)$ is a better choice. 

\begin{table}
	\begin{tabular}{|c|ccc|}\hline
		\#iterations	&Meta-MgNet&PDE-MgNet &PDE-MgNet-$\eta$  \\\hline
		$(\nu_1,...,\nu_J)=(1,1,1,1,1)$	&$42.1\pm1.58$ &$356.6\pm1.36$        &$230.60\pm6.99$       \\
		$(\nu_1,...,\nu_J)=(2,1,1,1,1)$	&$35.1\pm1.04$ &$178.9\pm2.74$&$ 149.7\pm 3.44$ \\
		$(\nu_1,...,\nu_J)=(3,1,1,1,1)$	&$21.9\pm1.81$  &$210.90\pm4.23$  & $198.60\pm6.89$       \\
		$(\nu_1,...,\nu_J)=(2,2,1,1,1)$	&$22.90\pm0.70$  &$214.50\pm2.42$      &$199.00\pm4.75$      \\
		\hline
		wall time 				&  &  &   	\\\hline
		$(\nu_1,...,\nu_J)=(1,1,1,1,1)$	&$0.23\pm0.01$ &$0.91\pm0.01$&$0.59\pm0.02$  \\
		$(\nu_1,...,\nu_J)=(2,1,1,1,1)$	&$0.22\pm0.01$ &$\mathbf{0.44\pm0.01}$&$\mathbf{0.37\pm0.01}$  \\
		$(\nu_1,...,\nu_J)=(3,1,1,1,1)$	&$\mathbf{0.16}\pm0.01$ &$0.56\pm0.01$  &$0.52\pm0.02$   \\
		$(\nu_1,...,\nu_J)=(2,2,1,1,1)$	&$0.17\pm0.00$ &$0.56\pm0.01$    &$0.53\pm0.01$\\
		\hline
	\end{tabular}
	\caption{The mean and std of the numbers of iteration and the wall time of different choices of $\nu_1,...,\nu_J$. The parameters of the 2D anisotropic diffusion equation are  $\epsilon=10^{-2}, \theta = 0$.}\label{table:AppendixII}
\end{table}

\begin{table}
	\begin{tabular}{|c|ccc|}\hline
		\#iterations	&Meta-MgNet&PDE-MgNet &PDE-MgNet-$\eta$  \\\hline
		$(\nu_1,...,\nu_J)=(1,1,1,1,1)$	&$1.2\text{e}3\pm22.51$ &-&$6.9\text{e}3\pm140.05$     \\
		$(\nu_1,...,\nu_J)=(2,1,1,1,1)$	&$375.2\pm5.88$ &-&$3.1\text{e}3\pm35.70$ \\
		$(\nu_1,...,\nu_J)=(3,1,1,1,1)$	&$355.30\pm10.51$ &-&$3.6\text{e}3\pm40.44$  \\
	    $(\nu_1,...,\nu_J)=(2,2,1,1,1)$	&$339.00\pm12.70$     &-& $2.8\text{e}3\pm33.22$        \\
		\hline
		wall time 				&  &  &   	\\\hline
		$(\nu_1,...,\nu_J)=(1,1,1,1,1)$	&$6.36\pm0.11$ &-& $17.52\pm0.34$      \\
		$(\nu_1,...,\nu_J)=(2,1,1,1,1)$	&$\mathbf{2.31\pm0.03}$ &-&$\mathbf{7.69\pm0.13}$ \\
		$(\nu_1,...,\nu_J)=(3,1,1,1,1)$	&$ 2.50\pm0.07$ &-&$9.42\pm0.11$   \\
		$(\nu_1,...,\nu_J)=(2,2,1,1,1)$	& $2.34\pm0.09$   &-&$8.45\pm0.10$     \\
		\hline
	\end{tabular}
	\caption{The mean and std of the number of iterations and the wall time of different choices of $\nu_1,...,\nu_J$. The parameters of the 2D anisotropic diffusion equation are  $\epsilon=10^{-4}, \theta = 0$.}\label{table:AppendixI2}
\end{table}
\subsection{Appendix III}
We compare the efficiency between $B_{\text d}$ in \textbf{Algorithm \ref{alg:Bd}} and $B_{\text {sc}}$ in \textbf{ Table \ref{table:AppendixIII}}. As we can see, $B_{\text {sc}}$ is much better than $B_{\text d}$. Therefore, we choose $B_{\text {sc}}$ as the meta smoother in other numerical experiments in this paper.
\begin{table}
	\begin{tabular}{|c|cc|}\hline
		\#iterations&$B_{\text {d}}$    &$B_{\text {sc}}$  \\\hline
		$\epsilon=1$		&- 					&$  4.0\pm0.00$	   \\
		$\epsilon=10^{-1}$	&$58.9\pm0.30$ 		&$  7.5\pm0.50$	   \\
		$\epsilon=10^{-2}$	&$597.8\pm8.08$ 	&$ 35.1\pm1.04$    \\
		$\epsilon = 10^{-3}$&$5.5\text{e}3\pm60.89$	&$171.6\pm6.34$    \\
		$\epsilon = 10^{-4}$&-				    &$375.2\pm5.88$    \\
		\hline
		wall time 				&  &     	\\\hline
		$\epsilon = 1$		&-					&$0.03\pm0.00$ 	\\
		$\epsilon = 10^{-1}$&$0.21\pm0.00$      &$0.05\pm0.00$  \\
		$\epsilon = 10^{-2}$&$2.07\pm0.03$  	&$0.22\pm0.01$ 	\\
		$\epsilon = 10^{-3}$&$19.04\pm0.20$    	&$1.06\pm0.04$  \\
		$\epsilon = 10^{-4}$&-					&$2.31\pm0.03$  \\
		\hline
	\end{tabular}
	\caption{The mean and std of the number of iterations and the wall time for the convolutional smoother $B_{\text {d}}$ and the SC method $B_{\text {sc}}$. }\label{table:AppendixIII}
\end{table}
\end{document}